

 \documentclass[final,3p,times,sort&compress]{elsarticle}



\usepackage{mathrsfs}        
\usepackage{graphicx}        
\usepackage{amssymb}         
\usepackage{amsmath}         
\usepackage{enumerate}       
\usepackage[colorlinks,
            linkcolor=red,
            anchorcolor=blue,
            citecolor=green
            ]{hyperref}
\usepackage{caption}
\usepackage{amsthm}          

\newtheorem{theorem}{Theorem}
\newtheorem{remark}{Remark}
\newtheorem{example}{Example}
\newtheorem{lemma}{Lemma}

\DeclareCaptionLabelSeparator{dotmark}{. }
\captionsetup[figure]{name=Fig.,labelsep=dotmark}

\begin{document}

\begin{frontmatter}

\phantomsection
\addcontentsline{toc}{chapter}{Sufficient and necessary conditions for stabilizing singular fractional order systems with partially measurable state}
\title{Sufficient and necessary conditions for stabilizing singular fractional order systems with partially measurable state}


\author{Yiheng Wei, Jiachang Wang, Tianyu Liu, Yong Wang$^*$}
\address{Department of Automation, University of Science and Technology of China, Hefei, 230026, China}

\cortext[cor1]{Corresponding author. E-mail: yongwang@ustc.edu.cn.}

\begin{abstract}
This paper is concerned with the stabilization problem of singular fractional order systems with order $\alpha\in(0,2)$. In addition to the sufficient and necessary condition for observer based control, a sufficient and necessary condition for output feedback control is proposed by adopting matrix variable decoupling technique. The developed results are more general and efficient than the existing works, especially for the output feedback case. Finally, two illustrative examples are given to verify the effectiveness and potential of the proposed approaches.
\end{abstract}

\begin{keyword}

Fractional order systems \sep singular systems, admissibility \sep stabilization \sep linear matrix inequality.
\end{keyword}

\end{frontmatter}


\section{Introduction}\label{Section1}
Fractional order systems (FOSs) represent a useful class of systems which are more complicated than classical integer order systems. They often appear in various practical applications such as viscoelastic systems, electrochemistry, economy and biology systems \cite{Mathiyalagan:2015Com,Zou:2017JMCI,Sun:2018CNSNS}. Due to its importance in both theoretical study and practical applications, such systems attract increasing attention, especially with respect to system identification \cite{Cui:2017NODY,Cheng:2018SP}, stability analysis \cite{Wei:2017FCAA,Huang:2016CTA}, controller synthesis \cite{Sheng:2017JFI,Chen:2018IJRNC} and numerical computing \cite{Wei:2016ISA,Stanislawski:2017JFI}, etc.

As a particular-yet wide-class of systems, singular systems are more general than standard ones since they can be described by a mixture of differential equations and algebraic equations. Accordingly, singular systems have received much attention in the past few decades \cite{Duan:2010Book}. Many valuable results on singular FOSs have been reported too \cite{Yin:2015AMC,Liu:2016NODY,Lazarevic:2016JVC,Jmal:2018NODY}. Based on the Gronwall's approach, the sufficient condition for the finite-time stability criterion was developed for stochastic singular FOS with $0<\alpha<1$ \cite{Mathiyalagan:2016Com}. With the idea of regularization, \cite{NDoye:2013Auto,Wei:2017ISA} tried to transform singular FOSs into normal ones and then to control them. In contrast, the admissibility analysis and controller synthesis on singular FOSs with order lying in $(0,2)$ were comprehensively studied in a pioneering thesis \cite{Xu:2009Master}. On the basis of this work, some scholars solved some other related problems \cite{NDoye:2010MCCA,Song:2012ICAL}. To remove the equality constraint in admissibility criterion, Yu proposed a criterion in the form of pure linear matrix inequality (LMI) for the $0<\alpha<1$ case \cite{Yu:2013AAS}. With this LMI criterion without equality constraints, the corresponding control problem was done subsequently \cite{Ji:2015ISA}. To reduce the number of decision matrix variables, Marir provided a more concise criterion \cite{Marir:2017IJCAS}. However, it involves complex decision matrix variables. To overcome this deficiency, an elegant and efficient LMI criterion was derived by introducing the concept of fractional order positive definite matrix \cite{Wei:2016CCC}. Similarly, an LMI criterion was presented from the region stability theory and the nonsingular decomposing technique \cite{Zhang:2017ISA}. Besides, Marir developed an admissible criterion for the $1<\alpha<2$ case \cite{Marir:2017JFI} which has the similar form with that in \cite{Yu:2013AAS}. Since it is not practical or even impossible to access all the state, the partial state control problem of singular FOSs becomes meaningful. \cite{Marir:2017JFI} studied the output feedback control problem for singular FOSs, while the LMI conditions in Theorem 3 are wrongly derived. The given criterion in \cite{Marir:2017JFI} was used in observer based control problem \cite{Lin:2018SCL}. \cite{Marir:2017IJCAS} also studied the observer based control problem for the $0<\alpha<1$ case, while Theorem 3 was also questionable. Motivated by the previous discussions, some observations can be reached. First, a criterion without inequality constraint, complex operations and many decision matrices is expected for observer based control. Nonetheless, the coupling phenomenon usually appears in such problem. Second, the sufficient and necessary condition for output feedback control is expected, since, to the best knowledge of the authors, the necessity is still an open problem for $\alpha=1$ and $E=I$. Third, a unified solution for both $0<\alpha<1$ and $1<\alpha<2$ cases is desirable, which will bring many theoretic and practical difficulties. In view of the above observations, this paper focuses on these challenging and essential problems.

The remainder of the paper is arranged as follows. Some preliminary results are provided along with the problem formulation in Section \ref{Section2}. In Section \ref{Section3}, the stabilization conditions of singular FOSs with two kinds of controllers are presented with rigorous proof. In Section \ref{Section4}, the effectiveness of our results is evaluated by two numerical examples. Finally, conclusions of this study are provided in Section \ref{Section5}.

\textbf{Notations.} $M^{\rm T}$ is the transpose of matrix $M$, $M^{\rm - 1}$ is the inverse of matrix $M$, ${\rm sym}(M)$ is used to denote $M^{\rm T}+M$, ${\rm det}(M)$ is the determinant of matrix $M$, ${\rm rank}(M)$ denotes the rank of matrix $M$, $I_n$ represents $n$ dimensional identity matrix, $*$ indicates the symmetric part of a matrix, such as $\Big[ {\begin{smallmatrix}
A&B\\
*&C
\end{smallmatrix}} \Big] = \Big[ {\begin{smallmatrix}
A&B\\
{{B^{\rm{T}}}}&C
\end{smallmatrix}} \Big]$ where $A = {A^{\rm{T}}}$ and $C = {C^{\rm{T}}}$, ${\deg}(f(s))$ means the degree of polynomial $f(s)$ and ${\rm arg}(z)$ stands for the angle of a complex number $z$.

\section{Preliminaries}\label{Section2}

In this work, we consider the following singular FOS
\begin{equation}\label{Eq1}
\left\{\begin{array}{rl}
E{{\mathscr D}^\alpha }x\left( t \right) =&\hspace{-6pt} Ax\left( t \right)+Bu\left( t \right),\\
y\left( t \right) =&\hspace{-6pt} Cx\left( t \right),
\end{array}\right.
\end{equation}
where $x\left( t \right)\in\mathbb{R}^n$ is the pseudo semi-state, $u\left( t \right)\in\mathbb{R}^m$ is the control input, $y\left( t \right)\in\mathbb{R}^p$ is the measured output, which is just the mentioned partial measurable state, $0<\alpha<2$ is the system commensurate order, $E\in\mathbb{R}^{n\times n}$ is a singular matrix such that ${\rm rank}(E)=r<n$ and $A,B$ and $C$ are constant matrices with appropriate dimensions.

The $\alpha$-th Caputo fractional derivative of $f(t)$ is defined by \cite{Monje:2010Book}
\begin{equation}\label{Eq2}
\begin{array}{l}
{\mathscr D}^\alpha f\left( t \right) \triangleq \frac{1}{{\Gamma \left( {n - \alpha } \right)}}\int_{0}^t {{{\left( {t - \tau } \right)}^{n - \alpha  - 1}}f^{(n)}\left( \tau  \right){\rm{d}}\tau },
\end{array}
\end{equation}
where $n-1<\alpha<n$, $n\in\mathbb{N}_+$ and $\Gamma \left( \cdot  \right) $ is the {\rm Gamma} function. Due to its good properties on Laplace transform and constant's derivative, the Caputo definition is adopted throughout this paper \cite{Monje:2010Book}.

Before moving on, the basic definitions \cite{Xu:2009Master} and relevant facts for singular FOSs are needed.

\begin{enumerate}[i)]
  \item The pair $\{E,A\}$ is said to be regular if ${\det \left( {sE - A} \right)}\not\equiv 0 $ for $s\in\mathbb{C}$.\vspace{-3pt}
  \item The pair $\{E,A\}$ is said to be impulse free if $\deg \left( {\det \left( {sE - A} \right)} \right) = {\rm{rank}}\left( E \right)$.\vspace{-3pt}
  \item The pair $\{E,A\}$ is said to be stable if all the finite eigenvalues of $\det \left( {\lambda E - A} \right) = 0$ lie in ${\mathcal D}_\alpha \triangleq\left\{ {\lambda :\left| {\arg \left( \lambda  \right)} \right|> \alpha \pi /2} \right\}$.\vspace{-3pt}
\end{enumerate}

The system (\ref{Eq1}) with $u(t)=0$ is said to be admissible if $\{E,A\}$ is regular, impulse free and stable. The regularity guarantees the existence and uniqueness of a time-domain response to a given singular FOS, while non-impulsiveness ensures no infinite dynamical modes in such system. Stability corresponds to a convergent system output.

The $n$-dimensional fractional order positive definite matrix set is defined by ${\mathbb{P}^{n \times n}_\alpha} \triangleq  \big\{ \sin \big( {\frac{{\alpha \pi }}{2}} \big)X+\cos \big( {\frac{{\alpha \pi }}{2}} \big) Y:X,Y \in {\mathbb{R}^{n \times n}},\left[ {\begin{smallmatrix}
X&Y\\
{ - Y}&X
\end{smallmatrix}} \right] >0 \big\}$ with $\alpha\in(0,1)$. With the well defined matrix set, the following lemmas provide two dual sufficient and necessary criteria for the admissibility of system (\ref{Eq1}) with $u(t)=0$ and $0<\alpha<1$.

\begin{lemma}\label{Lemma1} \cite{Wei:2016CCC}
The system (\ref{Eq1}) is admissible if and only if there exist matrices $P \in \mathbb{P}_\alpha ^{n \times n}$ and $Q \in {{\mathbb{R}}^{\left( {n - r} \right) \times n}}$, such that
\begin{equation}\label{Eq3}
{\rm{sym}}( {AP{E^{\rm{T}}} + A{E_0}Q} ) < 0,
\end{equation}
where ${E_0} \in {{\mathbb{R}}^{n \times \left( {n - r} \right)}}$, ${\rm rank}\left(E_0\right)=n-r$ and $E{E_0} = 0$.
\end{lemma}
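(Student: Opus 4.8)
The plan is to prove both implications through a restricted system equivalence (Weierstrass--Kronecker) transformation, reducing the singular pencil to a block form in which the algebraic (impulsive) part and the dynamic (slow) part decouple, and then to invoke the normal fractional stability criterion --- the $E=I$ specialization that underlies the very definition of $\mathbb{P}_\alpha^{n\times n}$ --- on the slow part. Concretely, since ${\rm rank}(E)=r$, I would first fix nonsingular $M,N$ with $MEN=\left[{\begin{smallmatrix}I_r&0\\0&0\end{smallmatrix}}\right]$, observe that $\ker E = N\,{\rm Im}\left[{\begin{smallmatrix}0\\I_{n-r}\end{smallmatrix}}\right]$, so that $E_0$ may be taken as $N\left[{\begin{smallmatrix}0\\I_{n-r}\end{smallmatrix}}\right]$ (any other admissible $E_0$ differs by a right nonsingular factor that can be absorbed into $Q$). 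A congruence by $M(\cdot)M^{\rm T}$ then turns (\ref{Eq3}) into ${\rm sym}\big(\bar A\bar P\bar E^{\rm T}+\bar A\left[{\begin{smallmatrix}0\\I_{n-r}\end{smallmatrix}}\right]\bar Q\big)<0$, where $\bar A = MAN$, $\bar P = N^{-1}PN^{-{\rm T}}$, $\bar Q = QM^{\rm T}$, and I would check that $\bar P\in\mathbb{P}_\alpha^{n\times n}$ because the cone is invariant under this congruence (it preserves the symmetric/skew splitting of $X$ and $Y$ and the positivity of $\left[{\begin{smallmatrix}X&Y\\-Y&X\end{smallmatrix}}\right]$).

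For sufficiency, writing $\bar A=\left[{\begin{smallmatrix}A_1&A_2\\A_3&A_4\end{smallmatrix}}\right]$ and expanding the transformed inequality, the structure $\bar E^{\rm T}=\left[{\begin{smallmatrix}I_r&0\\0&0\end{smallmatrix}}\right]$ annihilates the second block-column of $\bar P$, so the $(2,2)$ block of the left-hand side reduces to ${\rm sym}(A_4\bar Q_2)$, where $\bar Q_2$ is the right block of $\bar Q$. Being a principal submatrix of a negative definite matrix, ${\rm sym}(A_4\bar Q_2)<0$, which forces $A_4$ and $\bar Q_2$ to be nonsingular. Independently, ${\rm sym}(AP E^{\rm T}+A E_0 Q)<0$ makes $AP E^{\rm T}+A E_0 Q$ nonsingular, hence $A$ nonsingular, so $\det(sE-A)\not\equiv 0$ (it is nonzero at $s=0$), giving regularity; the nonsingularity of $A_4$ then yields $\deg(\det(sE-A))={\rm rank}(E)$, i.e.\ impulse-freeness. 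A Schur complement of the now-invertible $(2,2)$ block isolates the remaining condition as a fractional Lyapunov inequality ${\rm sym}(G\hat P)<0$ for the reduced slow matrix $G=A_1-A_2A_4^{-1}A_3$ with $\hat P\in\mathbb{P}_\alpha^{r\times r}$; the normal criterion then places the finite eigenvalues of the pencil, namely $\sigma(G)$, inside $\mathcal{D}_\alpha$, completing admissibility.

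For necessity, I would run the argument in reverse: admissibility supplies the canonical form with $G$ having its spectrum in $\mathcal{D}_\alpha$, so the converse half of the normal fractional stability criterion produces $\hat P\in\mathbb{P}_\alpha^{r\times r}$ with ${\rm sym}(G\hat P)<0$. I would then build $\bar P$ and $\bar Q$ explicitly --- placing $\hat P$ in the $(1,1)$ slot, choosing the slack block $\bar Q_2$ (for instance proportional to $-A_4^{-1}$) to make the $(2,2)$ block negative definite, and using the remaining off-diagonal freedom to cancel the cross terms --- and finally pull back through $\bar P = N^{-1}PN^{-{\rm T}}$ and $\bar Q=QM^{\rm T}$ to recover $P\in\mathbb{P}_\alpha^{n\times n}$ and $Q\in\mathbb{R}^{(n-r)\times n}$ satisfying (\ref{Eq3}).

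The main obstacle I anticipate is the decoupling within a single inequality while $P$ is genuinely non-symmetric: unlike the integer-order case, where a congruence preserves symmetric positive definiteness, here one must carry the full lifted positivity of $\left[{\begin{smallmatrix}X&Y\\-Y&X\end{smallmatrix}}\right]$ through every block manipulation and recognize the Schur complement precisely as a member of the fractional positive definite cone $\mathbb{P}_\alpha^{r\times r}$. Verifying that the slack term $\bar A\left[{\begin{smallmatrix}0\\I_{n-r}\end{smallmatrix}}\right]\bar Q$ simultaneously enforces impulse-freeness (through $A_4$) and leaves the slow-subsystem inequality intact --- without introducing spurious constraints that would break necessity --- is the delicate point on which the equivalence hinges.
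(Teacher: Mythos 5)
You should first note a structural point: the paper never proves Lemma~\ref{Lemma1} at all --- it is imported verbatim from \cite{Wei:2016CCC} and used as a black box --- so there is no in-paper proof to compare your argument against. Judged on its own merits, your proposal follows what is surely the intended route: reduce the pencil via nonsingular $M,N$ with $MEN=\big[\begin{smallmatrix}I_r&0\\0&0\end{smallmatrix}\big]$, absorb the choice of $E_0$ and the coordinate change into $Q$ and $P$ using congruence invariance of $\mathbb{P}_\alpha^{n\times n}$ (exactly the property that Theorem~\ref{Theorem1} of this paper formalizes), extract regularity and impulse-freeness from the nonsingularity of $A$ and $A_4$, and reduce stability to the normal ($E=I$) fractional Lyapunov criterion for $G=A_1-A_2A_4^{-1}A_3$. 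Both directions of your argument do go through, so the skeleton is sound.

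Two steps need tightening before this is a proof. First, the passage to ${\rm sym}(G\hat P)<0$ is not a Schur complement of the $(2,2)$ block of the symmetrized matrix (that complement is a different, quadratic expression, and it is not obviously of Lyapunov form); the clean step is a congruence of the whole inequality by $T=\big[\,I_r\ \ -A_2A_4^{-1}\,\big]$, under which the $\bar Q$-terms cancel identically and one gets precisely ${\rm sym}\big(G\bar P_{11}\big)<0$, where $\bar P_{11}=\big[\,I_r\ \ 0\,\big]\bar P\big[\,I_r\ \ 0\,\big]^{\rm T}$ is the $(1,1)$ \emph{principal block} of $\bar P$ --- a member of $\mathbb{P}_\alpha^{r\times r}$ by Theorem~\ref{Theorem1} --- and not a Schur complement of $\bar P$, as your closing remark suggests. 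Second, in the necessity construction the cross term $A_2\bar Q_2$ cannot be cancelled (it is tied to $\bar Q_2$), only made small: with $\bar Q_1=-A_4^{-1}A_3\hat P$ and $\bar Q_2=-cA_4^{-1}$, the Schur test on the assembled matrix reads ${\rm sym}(G\hat P)+\frac{c}{2}A_2A_4^{-1}A_4^{-\rm T}A_2^{\rm T}<0$, which holds for $c>0$ sufficiently \emph{small} and fails for large $c$; so the proportionality constant must be chosen small, not left free. With these two repairs your argument is complete and is, in all likelihood, the same proof as in the cited source.
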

\begin{lemma}\label{Lemma2} \cite{Wei:2016CCC}
The system (\ref{Eq1}) is admissible if and only if there exist matrices $P \in \mathbb{P}_\alpha ^{n \times n}$ and $Q \in {{\mathbb{R}}^{n\times\left( {n - r} \right) }}$, such that
\begin{equation}\label{Eq4}
{\rm{sym}}( {{E^{\rm{T}}PA} + Q{E_0}A} ) < 0,
\end{equation}
where ${E_0} \in {{\mathbb{R}}^{ \left( {n - r} \right)\times n}}$, ${\rm rank}\left(E_0\right)=n-r$ and ${E_0}E= 0$.
\end{lemma}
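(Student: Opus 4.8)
The plan is to obtain Lemma~\ref{Lemma2} as the transpose dual of Lemma~\ref{Lemma1}, rather than repeating the full admissibility argument. The starting point is that admissibility is invariant under the map $\{E,A\}\mapsto\{E^{\rm T},A^{\rm T}\}$. Indeed, $\det(sE^{\rm T}-A^{\rm T})=\det\big((sE-A)^{\rm T}\big)=\det(sE-A)$, so regularity and the degree condition $\deg(\det(sE-A))={\rm rank}(E)$ defining impulse-freeness are both preserved, and since the finite roots of $\det(\lambda E^{\rm T}-A^{\rm T})=0$ coincide with those of $\det(\lambda E-A)=0$, the location condition for stability is preserved as well. Hence $\{E,A\}$ is admissible if and only if $\{E^{\rm T},A^{\rm T}\}$ is admissible.

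With this reduction in hand, I would apply Lemma~\ref{Lemma1} to the pair $\{E^{\rm T},A^{\rm T}\}$. This yields $P\in\mathbb{P}^{n\times n}_\alpha$ and $\bar Q\in\mathbb{R}^{(n-r)\times n}$ with ${\rm sym}\big(A^{\rm T}PE+A^{\rm T}\bar E_0\bar Q\big)<0$, where $\bar E_0\in\mathbb{R}^{n\times(n-r)}$ spans the right null space of $E^{\rm T}$, that is, $E^{\rm T}\bar E_0=0$ and ${\rm rank}(\bar E_0)=n-r$. Transposing the inner matrix and using that ${\rm sym}(M)$ is symmetric, so that ${\rm sym}(M)={\rm sym}(M^{\rm T})$, the inequality becomes ${\rm sym}\big(E^{\rm T}P^{\rm T}A+\bar Q^{\rm T}\bar E_0^{\rm T}A\big)<0$. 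Setting $Q:=\bar Q^{\rm T}\in\mathbb{R}^{n\times(n-r)}$ and $E_0:=\bar E_0^{\rm T}\in\mathbb{R}^{(n-r)\times n}$, one has $E_0E=(E^{\rm T}\bar E_0)^{\rm T}=0$ and ${\rm rank}(E_0)=n-r$, which exactly matches the hypotheses imposed on $E_0$ in the statement.

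The one genuinely non-routine step, and the point I would treat most carefully, is that the decision variable $P^{\rm T}$ must again lie in $\mathbb{P}^{n\times n}_\alpha$ for the argument to close; that is, I must verify that $\mathbb{P}^{n\times n}_\alpha$ is closed under transpose. Writing $P=\sin\big(\tfrac{\alpha\pi}{2}\big)X+\cos\big(\tfrac{\alpha\pi}{2}\big)Y$ with $\big[{\begin{smallmatrix}X&Y\\-Y&X\end{smallmatrix}}\big]>0$, positive definiteness forces the symmetry $X=X^{\rm T}$ and the skew-symmetry $Y^{\rm T}=-Y$, so $P^{\rm T}=\sin\big(\tfrac{\alpha\pi}{2}\big)X-\cos\big(\tfrac{\alpha\pi}{2}\big)Y$. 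The associated block matrix $\big[{\begin{smallmatrix}X&-Y\\Y&X\end{smallmatrix}}\big]$ equals the congruence $T\big[{\begin{smallmatrix}X&Y\\-Y&X\end{smallmatrix}}\big]T$ with the orthogonal involution $T={\rm diag}(I_n,-I_n)$, hence it is positive definite, giving $P^{\rm T}\in\mathbb{P}^{n\times n}_\alpha$. Relabelling $P^{\rm T}$ as the new $P$ then produces ${\rm sym}\big(E^{\rm T}PA+QE_0A\big)<0$, which is precisely inequality~(\ref{Eq4}). Since admissibility of $\{E,A\}$, admissibility of $\{E^{\rm T},A^{\rm T}\}$, and the LMI of Lemma~\ref{Lemma1} for the transposed pair are all equivalent, reading the same chain backwards delivers the converse, thereby establishing both directions.
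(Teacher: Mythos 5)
The paper never proves Lemma~\ref{Lemma2} at all: both Lemma~\ref{Lemma1} and Lemma~\ref{Lemma2} are imported by citation from \cite{Wei:2016CCC}, so there is no internal proof to measure your argument against. Your derivation is correct and self-contained given Lemma~\ref{Lemma1}, and it makes explicit the transposition duality that the paper only gestures at by calling the two lemmas ``dual.'' The three load-bearing steps all check out: (i) admissibility is invariant under $\{E,A\}\mapsto\{E^{\rm T},A^{\rm T}\}$ because $\det(sE^{\rm T}-A^{\rm T})=\det(sE-A)$, so regularity, the degree condition defining impulse-freeness, and the finite eigenvalue locations are literally unchanged; (ii) since ${\rm sym}(M)$ is symmetric, negative definiteness is preserved when the argument is transposed, which turns the Lemma~\ref{Lemma1} inequality for $\{E^{\rm T},A^{\rm T}\}$ into ${\rm sym}(E^{\rm T}P^{\rm T}A+\bar Q^{\rm T}\bar E_0^{\rm T}A)<0$ with $E_0:=\bar E_0^{\rm T}$ satisfying $E_0E=0$ and ${\rm rank}(E_0)=n-r$; (iii) $\mathbb{P}^{n\times n}_\alpha$ is closed under transposition, which you establish via the congruence with $T={\rm diag}(I_n,-I_n)$ --- this is the one step that genuinely needs an argument, and yours is right, including the observation that under the standard LMI convention the block positivity forces $X=X^{\rm T}$ and $Y^{\rm T}=-Y$. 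This closure property is, moreover, the same kind of structural fact about $\mathbb{P}^{n\times n}_\alpha$ that the paper proves separately in Theorem~\ref{Theorem1} (closure under full-column-rank congruence), so your lemma-level argument fits the paper's toolkit. One pedantic point worth a sentence in a polished write-up: your construction produces one particular annihilator $E_0=\bar E_0^{\rm T}$, whereas the statement can be read as fixing an arbitrary $E_0$ with $E_0E=0$ and full row rank; since any two such annihilators differ by an invertible $(n-r)\times(n-r)$ left factor that can be absorbed into the free variable $Q$, the existence statement is independent of this choice, and both directions of your equivalence survive.
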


\section{Controller Design}\label{Section3}
\subsection{Observer based control}
Design the following observer based controller
\begin{eqnarray}\label{Eq5}
\left\{ \begin{array}{rl}
E{{\mathscr D}^\alpha }\hat x\left( t \right) =&\hspace{-6pt} A\hat x\left( t \right) + Bu\left( t \right) + L\left[ {C\hat x\left( t \right) - y\left( t \right)} \right],\\
u\left( t \right) =&\hspace{-6pt} K\hat x\left( t \right),
\end{array} \right.\hspace{-6pt}
\end{eqnarray}
where $\hat x\left( t \right)\in\mathbb{R}^n$ is the estimation pseudo state, $L\in\mathbb{R}^{n\times p}$ and $K\in\mathbb{R}^{m\times n}$ are, respectively, observer gain and controller gain to be designed, such that the augmented system
\begin{equation}\label{Eq6}
\bar E{{\mathscr D}^\alpha }\bar x\left( t \right) = \bar A\bar x\left( t \right),
\end{equation}
is admissible, where $\bar E = \Big[ {\begin{smallmatrix}
E&{}\\
{}&E
\end{smallmatrix}} \Big],~\bar x\left( t \right) = \Big[ {\begin{smallmatrix}
{x\left( t \right)}\\
{x\left( t \right) - \hat x\left( t \right)}
\end{smallmatrix}} \Big]$ and $\bar A = \Big[ {\begin{smallmatrix}
{A + BK}&{ - BK}\\
0&{A + LC}
\end{smallmatrix}} \Big].$

\vspace{3pt}
In preparation for designing $K$ and $L$, a useful property of $\mathbb{P}^{n\times n}_\alpha$ is introduced here, which can be regarded as a generalization of statement viii) Theorem 1 in \cite{Wei:2017FCAA}.
\begin{theorem}\label{Theorem1}
If $P \in \mathbb{P}_\alpha ^{n \times n}$, then for any $M \in {{\mathbb{R}}^{n \times r}}$ with ${\rm rank}\left(M\right)=r$, one has $M^{\rm T}PM\in\mathbb{P}_\alpha ^{r \times r}$.
\end{theorem}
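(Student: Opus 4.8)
The plan is to argue directly from the defining representation of the fractional order positive definite matrix set. Since $P \in \mathbb{P}_\alpha^{n \times n}$, I would first extract $X,Y \in \mathbb{R}^{n \times n}$ with $P = \sin\big(\frac{\alpha\pi}{2}\big)X + \cos\big(\frac{\alpha\pi}{2}\big)Y$ and $\left[{\begin{smallmatrix} X & Y \\ -Y & X \end{smallmatrix}}\right] > 0$. Because this block matrix is (symmetric) positive definite, its symmetry forces $X^{\rm T} = X$ and $Y^{\rm T} = -Y$. The goal is then to exhibit an analogous pair $X',Y' \in \mathbb{R}^{r \times r}$ certifying that $M^{\rm T} P M \in \mathbb{P}_\alpha^{r \times r}$.

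The natural candidates are $X' = M^{\rm T} X M$ and $Y' = M^{\rm T} Y M$. With this choice, linearity gives $M^{\rm T} P M = \sin\big(\frac{\alpha\pi}{2}\big)X' + \cos\big(\frac{\alpha\pi}{2}\big)Y'$ at once, and a one-line check confirms that $X'$ stays symmetric while $Y'$ stays skew-symmetric, so the required block structure is in place.

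The crux is to verify $\left[{\begin{smallmatrix} X' & Y' \\ -Y' & X' \end{smallmatrix}}\right] > 0$. The key observation is that this block matrix is precisely a congruence transform of the original one. Setting $\widetilde M = \left[{\begin{smallmatrix} M & 0 \\ 0 & M \end{smallmatrix}}\right]$, a direct block multiplication yields $\left[{\begin{smallmatrix} X' & Y' \\ -Y' & X' \end{smallmatrix}}\right] = \widetilde M^{\rm T} \left[{\begin{smallmatrix} X & Y \\ -Y & X \end{smallmatrix}}\right] \widetilde M$. Since ${\rm rank}(M) = r$ means $M$ has full column rank, $\widetilde M$ has full column rank $2r$, so $\widetilde M z \ne 0$ for every nonzero $z \in \mathbb{R}^{2r}$; hence $z^{\rm T} \widetilde M^{\rm T} \left[{\begin{smallmatrix} X & Y \\ -Y & X \end{smallmatrix}}\right] \widetilde M z > 0$, which is exactly the desired inequality. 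This delivers the membership $M^{\rm T} P M \in \mathbb{P}_\alpha^{r \times r}$.

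I do not anticipate a genuine obstacle; the only point requiring care is recognizing the block-diagonal congruence structure and using the full column rank of $M$ (rather than mere rank) to keep the transformed matrix strictly positive definite instead of merely semidefinite.
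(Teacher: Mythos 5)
Your proof is correct, and its skeleton matches the paper's: both take $X_1 = M^{\rm T}XM$, $Y_1 = M^{\rm T}YM$ as the certificate for $M^{\rm T}PM$, and both reduce everything to the same block congruence identity $\Bigl[\begin{smallmatrix} X_1 & Y_1 \\ -Y_1 & X_1 \end{smallmatrix}\Bigr] = \widetilde M^{\rm T}\Bigl[\begin{smallmatrix} X & Y \\ -Y & X \end{smallmatrix}\Bigr]\widetilde M$ with $\widetilde M = {\rm diag}(M,M)$. Where you part ways is in justifying that this congruence preserves strict positive definiteness. The paper takes a roundabout route: it writes a singular value decomposition ${\rm diag}(M^{\rm T},M^{\rm T}) = U^{\rm T}\Sigma\bigl[\begin{smallmatrix} I_{2r} & 0 \end{smallmatrix}\bigr]V$, invokes orthogonal similarity to get $V\Bigl[\begin{smallmatrix} X & Y \\ -Y & X \end{smallmatrix}\Bigr]V^{\rm T}>0$, extracts the leading $2r\times 2r$ principal submatrix (positive definite as a principal submatrix of a positive definite matrix), and finally applies an invertible congruence by $\Sigma^{\rm T}U$. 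Your one-line replacement --- $\widetilde M$ has full column rank $2r$, so $\widetilde M z \ne 0$ for every nonzero $z$, hence $z^{\rm T}\widetilde M^{\rm T}(\cdot)\widetilde M z > 0$ --- establishes exactly the same fact directly from the quadratic-form definition, with no SVD, no eigenvalue argument, and no submatrix lemma; since the paper's SVD machinery is not reused anywhere else, your streamlining is a genuine simplification at no cost in generality. Your side remark that positive definiteness forces $X^{\rm T}=X$ and $Y^{\rm T}=-Y$ is consistent with the symmetric reading of $>0$, though it is not actually needed: the quadratic-form argument goes through regardless.
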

\begin{proof}
From the given condition of $P$, there exist two matrices $X,Y\in\mathbb{R}^{n \times n}$ satisfying $P=\sin \big( {\frac{{\alpha \pi }}{2}} \big)X+\cos \big( {\frac{{\alpha \pi }}{2}} \big) Y$ and $\left[ {\begin{smallmatrix}
X&Y\\
{ - Y}&X
\end{smallmatrix}} \right] >0 $. By defining $X_1=M^{\rm T}XM$, $Y_1=M^{\rm T}YM$ and $P_1=M^{\rm T}PM$, it follows
\begin{equation}\label{Eq7}
{\textstyle P_1=\sin \big( {\frac{{\alpha \pi }}{2}} \big)X_1+\cos \big( {\frac{{\alpha \pi }}{2}} \big) Y_1,}
\end{equation}
where $X_1,Y_1,P_1\in{{\mathbb{R}}^{r \times r}}$.

Additionally, the big matrix can be expressed as
\begin{equation}\label{Eq8}
{\textstyle \Big[ {\begin{smallmatrix}
{{X_1}}&{{Y_1}}\\
{ - {Y_1}}&{{X_1}}
\end{smallmatrix}} \Big] = \Big[ {\begin{smallmatrix}
{{M^{\rm{T}}}}&{}\\
{}&{{M^{\rm{T}}}}
\end{smallmatrix}} \Big]\Big[ {\begin{smallmatrix}
X&Y\\
{ - Y}&X
\end{smallmatrix}} \Big]\Big[ {\begin{smallmatrix}
M&{}\\
{}&M
\end{smallmatrix}} \Big].}
\end{equation}

By using the singular value decomposition, one has
\begin{equation}\label{Eq9}
{\textstyle \Big[ {\begin{smallmatrix}
{{M^{\rm{T}}}}&{}\\
{}&{{M^{\rm{T}}}}
\end{smallmatrix}} \Big]=U^{\rm T}\Sigma\left[ {\begin{smallmatrix}
{{I_{2r}}}&{{0}}
\end{smallmatrix}} \right] V,}
\end{equation}
where $\Sigma$ is a $2r\times 2r$ positive diagonal matrix, $U$ and $V$ are orthogonal matrices with appropriate dimensions.

Because the similarity transformation does not change the eigenvalue of the target matrix, then
\begin{equation}\label{Eq10}
{\textstyle V \Big[ {\begin{smallmatrix}
X&Y\\
{ - Y}&X
\end{smallmatrix}} \Big]V^{\rm T}=V\Big[ {\begin{smallmatrix}
X&Y\\
{ - Y}&X
\end{smallmatrix}} \Big]V^{-1}>0.}
\end{equation}

Considering that the order principal minor determinant of a positive definite matrix is still positive definite, one has
\begin{equation}\label{Eq11}
{\textstyle \left[ {\begin{smallmatrix}
{{I_{2r}}}&{{0}}
\end{smallmatrix}} \right] V \Big[ {\begin{smallmatrix}
X&Y\\
{ - Y}&X
\end{smallmatrix}} \Big]V^{\rm T}\Big[ {\begin{smallmatrix}
I_{2r}\\
0
\end{smallmatrix}} \Big]>0.}
\end{equation}

From the fact that congruent transformation does not change the
sign of the matrix eigenvalue, the desired result follows immediately
\begin{equation}\label{Eq12}
{\textstyle \begin{array}{l}
\Big[ {\begin{smallmatrix}
{{M^{\rm{T}}}}&{}\\
{}&{{M^{\rm{T}}}}
\end{smallmatrix}} \Big]\Big[ {\begin{smallmatrix}
X&Y\\
{ - Y}&X
\end{smallmatrix}} \Big]\Big[ {\begin{smallmatrix}
M&{}\\
{}&M
\end{smallmatrix}} \Big]\\
 ={U^{\rm{T}}}\Sigma \left[ {\begin{smallmatrix}
{{I_{2r}}}&0
\end{smallmatrix}} \right]V\Big[ {\begin{smallmatrix}
X&Y\\
{ - Y}&X
\end{smallmatrix}} \Big]{V^{\rm{T}}}\Big[ {\begin{smallmatrix}
{{I_{2r}}}\\
0
\end{smallmatrix}} \Big]{\Sigma ^{\rm{T}}}U\\
 = {\big( {{\Sigma ^{\rm{T}}}U} \big)^{\rm{T}}}\left( {\left[ {\begin{smallmatrix}
{{I_{2r}}}&0
\end{smallmatrix}} \right]V\Big[ {\begin{smallmatrix}
X&Y\\
{ - Y}&X
\end{smallmatrix}} \Big]{V^{\rm{T}}}\Big[ {\begin{smallmatrix}
{{I_{2r}}}\\
0
\end{smallmatrix}} \Big]} \right){\Sigma ^{\rm{T}}}U\\
>0.
\end{array}}
\end{equation}
Hence, one can say that $P_1\in\mathbb{P}_\alpha ^{r \times r}$.
\end{proof}

With the help of Lemmas \ref{Lemma1}, \ref{Lemma2} and Theorem \ref{Theorem1}, the criterion on how to determine $L$ and $K$ can be presented.

\begin{theorem}\label{Theorem2}
Designing the controller (\ref{Eq5}) for system (\ref{Eq1}), the resulting closed-loop control system (\ref{Eq6}) is admissible if and only if there exist matrices $P_1,P_2 \in \mathbb{P}_\alpha ^{n \times n}$, $Q_1 \in {{\mathbb{R}}^{\left( {n - r} \right) \times n}}$, $Q_2 \in {{\mathbb{R}}^{n\times\left( {n - r} \right)}}$, $R_1\in\mathbb{R}^{m\times n}$ and $R_2\in\mathbb{R}^{n\times p}$, such that
\begin{equation}\label{Eq13}
{\rm{sym}}( {A{P_1}{E^{\rm{T}}} + A{E_1}{Q_1} + B{R_1}} ) < 0,
\end{equation}
\begin{equation}\label{Eq14}
{\rm{sym}}( {{E^{\rm{T}}}{P_2}A + {Q_2}{E_2}A + {R_2}C} ) < 0,
\end{equation}
where ${E_1} \in {{\mathbb{R}}^{n \times \left( {n - r} \right)}}$, ${E_2} \in {{\mathbb{R}}^{ \left( {n - r} \right)\times n}}$, ${\rm rank}\left(E_1\right)={\rm rank}\left(E_2\right)=n-r$, $E{E_1} = 0$, ${E_2}E= 0$ and the desired gains  are given by
\begin{equation}\label{Eq15}
K = {R_1}{( {{P_1}{E^{\rm{T}}} + {E_1}{Q_1}} )^{ - 1}},
\end{equation}
\begin{equation}\label{Eq16}
L = {( {{E^{\rm{T}}}{P_2} + {Q_2}{E_2}} )^{ - 1}}{R_2}.
\end{equation}
\end{theorem}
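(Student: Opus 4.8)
The plan is to exploit the block upper-triangular structure of $\{\bar E,\bar A\}$, which decouples the admissibility of the augmented system (\ref{Eq6}) into two independent subproblems, and then to linearize each by a change of matrix variables. Since $\bar E$ is block diagonal and $\bar A$ is block upper-triangular, the characteristic polynomial factorizes as
$$\det(s\bar E-\bar A)=\det\big(sE-(A+BK)\big)\,\det\big(sE-(A+LC)\big).$$
I would first check that each defining property of admissibility passes through this factorization: regularity holds iff neither factor vanishes identically; the degree of the product equals $2r=\operatorname{rank}(\bar E)$ iff each factor has degree exactly $r$, i.e. iff both subpairs are impulse free; and the finite zeros of the product are the union of those of the two factors, so they all lie in $\mathcal D_\alpha$ iff both subpairs are stable. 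Hence $\{\bar E,\bar A\}$ is admissible if and only if both $\{E,A+BK\}$ and $\{E,A+LC\}$ are admissible.

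Having split the problem, I would apply Lemma \ref{Lemma1} to $\{E,A+BK\}$ and the dual Lemma \ref{Lemma2} to $\{E,A+LC\}$. The former gives admissibility of $\{E,A+BK\}$ iff there exist $P_1\in\mathbb{P}_\alpha^{n\times n}$ and $Q_1$ with $\operatorname{sym}\big((A+BK)(P_1E^{\rm T}+E_1Q_1)\big)<0$; writing $\Phi_1:=P_1E^{\rm T}+E_1Q_1$ and introducing the new variable $R_1:=K\Phi_1$ turns this bilinear inequality into the LMI (\ref{Eq13}), because $(A+BK)\Phi_1=A\Phi_1+BR_1$. Dually, Lemma \ref{Lemma2} applied to $\{E,A+LC\}$ yields $\operatorname{sym}\big((E^{\rm T}P_2+Q_2E_2)(A+LC)\big)<0$, and setting $\Phi_2:=E^{\rm T}P_2+Q_2E_2$, $R_2:=\Phi_2L$ produces (\ref{Eq14}). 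This is exactly the matrix variable decoupling advertised in the abstract, and it separates the controller gain from the observer gain, avoiding the usual coupling phenomenon.

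The delicate step, which I expect to be the main obstacle, is to justify the gain formulas (\ref{Eq15})--(\ref{Eq16}), i.e. to show $\Phi_1,\Phi_2$ nonsingular so that $K=R_1\Phi_1^{-1}$ and $L=\Phi_2^{-1}R_2$ are well defined. In the necessity direction this is immediate: the inequalities produced by the admissible subpairs have the form $\operatorname{sym}(A_{\mathrm{cl}}\Phi)<0$ with $A_{\mathrm{cl}}$ the closed-loop matrix, so $\Phi v=0$ for some $v\neq 0$ would give $v^{\rm T}\operatorname{sym}(A_{\mathrm{cl}}\Phi)v=0$, a contradiction. For sufficiency I would argue structurally: left-multiplying $\Phi_1 v=0$ by $E$ and using $EE_1=0$ gives $(E^{\rm T}v)^{\rm T}P_1(E^{\rm T}v)=0$, and the positivity $\operatorname{sym}(P_1)=2\sin(\tfrac{\alpha\pi}{2})X_1>0$ built into $\mathbb{P}_\alpha^{n\times n}$ forces $E^{\rm T}v=0$ and then $Q_1v=0$; Theorem \ref{Theorem1}, which guarantees that the compression of $P_1$ onto $\operatorname{range}(E^{\rm T})$ again lies in $\mathbb{P}_\alpha$ and is therefore invertible, is the tool that excludes the residual degenerate directions and secures nonsingularity of $\Phi_1$, and dually of $\Phi_2$. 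Once invertibility is established, substituting (\ref{Eq15})--(\ref{Eq16}) back recovers the clean admissibility LMIs of Lemmas \ref{Lemma1}--\ref{Lemma2} for the two subpairs, and the factorization of the first paragraph closes the equivalence.
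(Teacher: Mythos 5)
Your overall architecture is sound and genuinely different from the paper's. You decouple the augmented pencil at the level of the characteristic polynomial, $\det(s\bar E-\bar A)=\det\big(sE-(A+BK)\big)\det\big(sE-(A+LC)\big)$, reduce admissibility of (\ref{Eq6}) to admissibility of the two subpairs, and only then apply Lemma \ref{Lemma1} / Lemma \ref{Lemma2} subsystem by subsystem with the linearizing substitutions $R_1=K\Phi_1$, $R_2=\Phi_2L$. The paper never performs this frequency-domain decoupling: its sufficiency assembles a block-diagonal certificate $P=\mathrm{diag}(P_1,\epsilon\tilde P_2)$, $Q=\mathrm{diag}(Q_1,\epsilon\tilde Q_2)$ for the augmented pencil, first using Lemma \ref{Lemma1} to convert the observer-side certificate from Lemma-\ref{Lemma2} form into Lemma-\ref{Lemma1} form and then an $\epsilon$-scaling/Schur argument to dominate the off-diagonal block $-\epsilon BK(\tilde P_2E^{\rm T}+E_1\tilde Q_2)$; its necessity compresses an augmented certificate by $[\,I_n\ \ 0\,]$ and $[\,0\ \ I_n\,]$, which is exactly where Theorem \ref{Theorem1} is needed. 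Your route needs neither the $\epsilon$ trick nor Theorem \ref{Theorem1}, and your necessity direction is in fact cleaner than the paper's: the paper's projection identity (\ref{Eq21}) silently drops the cross terms $-BK(P_{21}E^{\rm T}+E_1Q_{21})$ arising from the off-diagonal blocks of $P$ and $Q$, a step your per-subsystem argument avoids entirely.

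There is, however, a genuine flaw in the step you yourself single out as the main obstacle: the claim that, in the sufficiency direction, the LMI (\ref{Eq13}) forces $\Phi_1=P_1E^{\rm T}+E_1Q_1$ to be nonsingular. It does not. Your own computation shows $\ker\Phi_1=\ker E^{\rm T}\cap\ker Q_1$, and on that subspace (\ref{Eq13}) only constrains $\mathrm{sym}(BR_1)$, which can perfectly well be negative there. Concretely, take $n=2$, $r=1$, $E=\mathrm{diag}(1,0)$, $E_1=(0,1)^{\rm T}$, $A=-I_2$, $B=(0,1)^{\rm T}$, $P_1=I_2\in\mathbb{P}_\alpha^{2\times 2}$ (choose $X=I_2/\sin(\tfrac{\alpha\pi}{2})$, $Y=0$), $Q_1=0$, $R_1=(0,-1)$: the left-hand side of (\ref{Eq13}) equals $\mathrm{diag}(-2,-2)<0$, yet $\Phi_1=\mathrm{diag}(1,0)$ is singular and (\ref{Eq15}) is undefined. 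The appeal to Theorem \ref{Theorem1} cannot repair this: compressing $P_1$ onto the range of $E^{\rm T}$ says nothing about vectors lying in $\ker E^{\rm T}\cap\ker Q_1$. (Your necessity-direction invertibility argument is correct, since there the inequality has the closed-loop form $\mathrm{sym}(A_{\mathrm{cl}}\Phi)<0$.)

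The standard repair is perturbation rather than a pointwise invertibility claim: letting $W$ be a basis matrix of $\ker E^{\rm T}$, singularity of $\Phi_1$ is the single polynomial condition $\det(Q_1W)=0$, so any feasible point of the strict, hence open, inequality (\ref{Eq13}) can be perturbed in $Q_1$ to a feasible point with $\Phi_1$ invertible (and likewise for $\Phi_2$), after which your argument closes. It is worth noting that the paper is silent on this very point---it defines $K$ and $L$ by (\ref{Eq15})--(\ref{Eq16}) and immediately invokes Lemmas \ref{Lemma1}--\ref{Lemma2} without justifying invertibility---so you have put your finger on a real issue; it is only your proposed resolution of it that fails.
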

\begin{proof} The proof of this theorem can be presented from two aspects.

\textbf{Sufficiency.} From (\ref{Eq14}) and Lemma \ref{Lemma2}, it can be found that the system $E{{\mathscr D}^\alpha }x\left( t \right) = \left( {A + LC} \right)x\left( t \right)$ is admissible. By applying Lemma \ref{Lemma1}, there exist matrices ${\tilde P}_2 \in \mathbb{P}_\alpha ^{n \times n}$ and ${\tilde Q}_2 \in {{\mathbb{R}}^{\left( {n - r} \right) \times n}}$
satisfying
\begin{eqnarray}\label{Eq17}
\Theta={\rm{sym}}( {\left( {A + LC} \right){{\tilde P}_2}{E^{\rm{T}}} + \left( {A + LC} \right){E_1}{{\tilde Q}_2}} ) < 0.
\end{eqnarray}

Formula (\ref{Eq13}) can be equivalently expressed as
\begin{eqnarray}\label{Eq18}
\Xi={\rm{sym}}( {\left( {A + BK} \right){P_1}{E^{\rm{T}}} + \left( {A + BK} \right){E_1}{Q_1}} ) < 0.
\end{eqnarray}

Hence, there exists a scalar $\epsilon > 0$, such that the following LMI holds
\begin{equation}\label{Eq19}
{\textstyle \begin{array}{l}
{\rm{sym}}( {\bar AP{{\bar E}^{\rm{T}}} + \bar A{{\bar E}_0}Q} )\\
 = \left[ {\begin{array}{*{20}{c}}
\Xi &{ - \varepsilon BK( {{{\tilde P}_2}{E^{\rm{T}}} + {E_1}{{\tilde Q}_2}})}\\
{*}&{\varepsilon \Theta }
\end{array}} \right]\\
 < 0
\end{array}}
\end{equation}
 for $P = \Big[ {\begin{smallmatrix}
{{P_1}}&{}\\
{}&{\varepsilon {{\tilde P}_2}}
\end{smallmatrix}} \Big]$, $Q = \Big[ {\begin{smallmatrix}
{{Q_1}}&{}\\
{}&{\varepsilon {{\tilde Q}_2}}
\end{smallmatrix}} \Big]$ and ${E_0} = \Big[ {\begin{smallmatrix}
{{E_1}}&{}\\
{}&{{E_1}}
\end{smallmatrix}} \Big]$. Therefore, under the conditions in Theorem \ref{Theorem1}, the resulting closed-loop control system (\ref{Eq6}) is admissible.

\textbf{Necessity.} It can be known from Lemma \ref{Lemma1}, if the system (\ref{Eq6}) is admissible, then there exist matrices $P$, $Q$ and ${\bar E}_1$ with appropriate dimensions satisfying
\begin{equation}\label{Eq20}
{\rm{sym}}( {\bar AP{{\bar E}^{\rm{T}}} + \bar A{{\bar E}_1}Q} ) < 0.
\end{equation}

Without loss of generality, let us suppose those matrices satisfying $P = \Big[ {\begin{smallmatrix}
{{P_1}}&{{P_{12}}}\\
{{P_{21}}}&{{P_{22}}}
\end{smallmatrix}} \Big]$, $Q =\hspace{-2pt} \Big[ {\begin{smallmatrix}
{{Q_1}}&{{Q_{12}}}\\
{{Q_{21}}}&{{Q_{22}}}
\end{smallmatrix}} \Big]$, ${{\bar E}_1} = \Big[ {\begin{smallmatrix}
{{E_1}}&0\\
0&{{E_1}}
\end{smallmatrix}} \Big]$ and ${R_1} = K( {{P_1}{E^{\rm{T}}} + {E_1}{Q_1}} )$. By adopting Theorem \ref{Theorem1}, it can obtain that if $P \in \mathbb{P}_\alpha ^{2n \times 2n}$, then $\left[ {\begin{smallmatrix}
{{I_n}}&0
\end{smallmatrix}} \right]P\left[ {\begin{smallmatrix}
{{I_n}}\\
0
\end{smallmatrix}} \right]\in \mathbb{P}_\alpha ^{n \times n}$. Furthermore, one has
\begin{equation}\label{Eq21}
\begin{array}{l}
[ {\begin{array}{*{20}{c}}
{{I_n}}&0
\end{array}} ]{\rm{sym}}( {\bar AP{{\bar E}^{\rm{T}}} + \bar A{{\bar E}_1}Q} )\Big[ {\begin{array}{*{20}{c}}
{{I_n}}\\
0
\end{array}} \Big]\\
 = {\rm{sym}}( {\left( {A + BK} \right){P_1}{E^{\rm{T}}} + \left( {A + BK} \right){E_1}{Q_1}} )\\
 = {\rm{sym}}( {A{P_1}{E^{\rm{T}}} + A{E_1}{Q_1} + B{R_1}} )\\
 < 0,
\end{array}
\end{equation}
which implies the inequality (\ref{Eq13}).

In a similar way, by using Lemma \ref{Lemma2}, it follows that if the system (\ref{Eq6}) is admissible, then there exist matrices $P$, $Q$ and ${\bar E}_2$ with appropriate dimensions satisfying
\begin{equation}\label{Eq22}
{\rm{sym}}( {{{\bar E}^{\rm{T}}}P\bar A + Q{{\bar E}_2}\bar A} ) < 0.
\end{equation}

By assuming $P = \Big[ {\begin{smallmatrix}
{{P_{11}}}&{{P_{12}}}\\
{{P_{21}}}&{{P_2}}
\end{smallmatrix}} \Big]$, $Q = \Big[ {\begin{smallmatrix}
{{Q_{11}}}&{{Q_{12}}}\\
{{Q_{21}}}&{{Q_2}}
\end{smallmatrix}} \Big]$, ${{\bar E}_2} = \Big[ {\begin{smallmatrix}
{{E_2}}&{}\\
{}&{{E_2}}
\end{smallmatrix}} \Big]$ and ${R_2} = ( {{E^{\rm{T}}}{P_2} + {Q_2}{E_2}} )L$, it becomes
\begin{equation}\label{Eq23}
\begin{array}{l}
[ {\begin{array}{*{20}{c}}
0&{{I_n}}
\end{array}} ]{\rm{sym}}( {{{\bar E}^{\rm{T}}}P\bar A + Q{{\bar E}_2}\bar A} )\Big[ {\begin{array}{*{20}{c}}
0\\
{{I_n}}
\end{array}} \Big]\\
 = {\rm{sym}}( {{E^{\rm{T}}}{P_2}\left( {A + LC} \right) + {Q_2}{E_2}\left( {A + LC} \right)} )\\
 = {\rm{sym}}( {{E^{\rm{T}}}{P_2}A + {Q_2}{E_2}A + {R_2}C} )\\
 < 0,
\end{array}
\end{equation}
which leads to the inequality (\ref{Eq14}). All of the above discussions complete the proof of Theorem \ref{Theorem2}.
\end{proof}

\begin{remark}\label{Remark1}
With the introduction of $R_1$ and $R_2$, the coupling problem of multiple decision matrices has been solved well. Without adopting the Kronecker product, Theorem \ref{Theorem2} is more concise than that in \cite{Ji:2015ISA}. Compared with the result in \cite{Marir:2017IJCAS}, the proposed approach avoids the complex calculation successfully.
\end{remark}
\subsection{Output feedback control}
The objective of this subsection is to design an output feedback controller
\begin{equation}\label{Eq24}
u\left( t \right) = Fy\left( t \right),
\end{equation}
such that the resulting closed-loop control system
\begin{equation}\label{Eq25}
E{{\mathscr D}^\alpha }x\left( t \right) = \left( {A + BFC} \right)x\left( t \right)
\end{equation}
is admissible. First, one relevant lemma will be provided here, which will play a key role in dealing with the decoupling problem of matrix variables.
\begin{lemma}\label{Lemma3} {\rm \cite{Saadni:2006AJC}}
Let $\Phi $, $a$ and $b$ be given matrices with appropriate dimensions, thus
\begin{equation}\label{Eq26}
{\textstyle \left\{ {\begin{array}{*{20}{l}}
{\Phi  < 0}\\
{\Phi  + {\rm{sym}}( {a{b^{\rm{T}}}} ) < 0}
\end{array}} \right.}
\end{equation}
holds if and only if there exists an appropriate dimension matrix $G$ which satisfies
\begin{equation}\label{Eq27}
{\textstyle \left[ {\begin{array}{cc}
\Phi &{a + b{G^{\rm{T}}}}\\
\ast&{- G-G^{\rm{T}}}
\end{array}} \right] < 0.}
\end{equation}
\end{lemma}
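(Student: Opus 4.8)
The plan is to read (\ref{Eq27}) as an elimination-type condition in which the unknown $G$ enters affinely, and to treat the two implications separately: the easy direction by an elementary congruence, and the hard direction by the Projection (elimination) Lemma. Throughout, write $\Phi=\Phi^{\rm T}\in\mathbb{R}^{n\times n}$, $a,b\in\mathbb{R}^{n\times k}$ and $G\in\mathbb{R}^{k\times k}$, so that the left-hand side of (\ref{Eq27}) is symmetric of size $n+k$.

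For \textbf{sufficiency}, suppose (\ref{Eq27}) holds. Its leading principal block is $\Phi$, and any principal submatrix of a negative definite matrix is negative definite, so $\Phi<0$ is immediate. For the remaining inequality I would perform a congruence by the full-column-rank matrix $\big[\begin{smallmatrix} I_n\\ b^{\rm T}\end{smallmatrix}\big]$; expanding and cancelling the $G$-dependent terms gives
$$\big[\begin{smallmatrix} I_n & b\end{smallmatrix}\big]\left[\begin{smallmatrix}\Phi & a+bG^{\rm T}\\ \ast & -G-G^{\rm T}\end{smallmatrix}\right]\big[\begin{smallmatrix} I_n\\ b^{\rm T}\end{smallmatrix}\big]=\Phi+{\rm sym}(ab^{\rm T}),$$
and the left side is negative definite by (\ref{Eq27}), so $\Phi+{\rm sym}(ab^{\rm T})<0$. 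This establishes (\ref{Eq26}).

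The \textbf{necessity} direction is where the main difficulty lies, since $G$ must now be produced from the two hypotheses alone. Eliminating the $(1,1)$ block of (\ref{Eq27}) by a Schur complement (legitimate because $\Phi<0$) turns it into $G+G^{\rm T}>(a+bG^{\rm T})^{\rm T}(-\Phi)^{-1}(a+bG^{\rm T})$, a matrix inequality that is quadratic in $G$ and not solvable by inspection; this is the obstacle. The clean resolution is the Projection Lemma. I would rewrite (\ref{Eq27}) in the affine form $\Psi+\mathcal{P}^{\rm T}G\,\mathcal{Q}+\mathcal{Q}^{\rm T}G^{\rm T}\mathcal{P}<0$ with $\Psi=\big[\begin{smallmatrix}\Phi & a\\ a^{\rm T}&0\end{smallmatrix}\big]$, $\mathcal{P}=\big[\begin{smallmatrix}0 & I_k\end{smallmatrix}\big]$ and $\mathcal{Q}=\big[\begin{smallmatrix} b^{\rm T} & -I_k\end{smallmatrix}\big]$, and verify by a one-line multiplication that this reproduces the off-diagonal block $bG^{\rm T}$ and the $(2,2)$ block $-(G+G^{\rm T})$ of (\ref{Eq27}).

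By the Projection Lemma such a $G$ exists if and only if $N_{\mathcal P}^{\rm T}\Psi N_{\mathcal P}<0$ and $N_{\mathcal Q}^{\rm T}\Psi N_{\mathcal Q}<0$, where the columns of $N_{\mathcal P}$ and $N_{\mathcal Q}$ span the kernels of $\mathcal{P}$ and $\mathcal{Q}$. These kernels are spanned by $N_{\mathcal P}=\big[\begin{smallmatrix} I_n\\ 0\end{smallmatrix}\big]$ and $N_{\mathcal Q}=\big[\begin{smallmatrix} I_n\\ b^{\rm T}\end{smallmatrix}\big]$, and a short computation yields $N_{\mathcal P}^{\rm T}\Psi N_{\mathcal P}=\Phi$ and $N_{\mathcal Q}^{\rm T}\Psi N_{\mathcal Q}=\Phi+{\rm sym}(ab^{\rm T})$. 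Hence the two projected conditions coincide exactly with the hypotheses (\ref{Eq26}), a feasible $G$ exists, and necessity follows. I note that this last argument in fact proves both directions simultaneously, with the explicit congruence above serving as a self-contained check of the easy direction.
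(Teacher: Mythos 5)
Your proof is correct, but there is nothing in the paper to compare it against: Lemma \ref{Lemma3} is stated as an imported result, cited from \cite{Saadni:2006AJC} without any proof in this paper, so your argument stands on its own. Both halves check out. For sufficiency, the congruence by the full-column-rank matrix $\big[\begin{smallmatrix} I_n\\ b^{\rm T}\end{smallmatrix}\big]$ does cancel every $G$-dependent term, $(bG^{\rm T})b^{\rm T}+bGb^{\rm T}-b(G+G^{\rm T})b^{\rm T}=0$, leaving exactly $\Phi+{\rm sym}(ab^{\rm T})<0$, and $\Phi<0$ follows from the leading principal block. For necessity, your affine rewriting $\Psi+\mathcal{P}^{\rm T}G\mathcal{Q}+\mathcal{Q}^{\rm T}G^{\rm T}\mathcal{P}$ with $\Psi=\big[\begin{smallmatrix}\Phi & a\\ a^{\rm T}&0\end{smallmatrix}\big]$, $\mathcal{P}=[\,0~~I_k\,]$, $\mathcal{Q}=[\,b^{\rm T}~-I_k\,]$ reproduces (\ref{Eq27}) exactly, the kernel bases $N_{\mathcal P}=\big[\begin{smallmatrix} I_n\\ 0\end{smallmatrix}\big]$, $N_{\mathcal Q}=\big[\begin{smallmatrix} I_n\\ b^{\rm T}\end{smallmatrix}\big]$ are right, and the two projected inequalities are precisely the two conditions in (\ref{Eq26}); this is the standard way such slack-variable lemmas are established in the literature. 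The one caveat worth stating explicitly is that your necessity direction is complete only modulo the Projection Lemma itself (Gahinet--Apkarian), which is a genuinely deeper result than the elementary congruence; since the paper treats Lemma \ref{Lemma3} as a black-box citation anyway, resting on that standard result is entirely reasonable, but in a fully self-contained write-up you would either cite it or reproduce its proof. Your side remark that the Schur-complement route leads to an inequality quadratic in $G$, and hence does not yield $G$ by inspection, correctly identifies why some such elimination device is needed.
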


\begin{theorem}\label{Theorem3}
Designing the controller (\ref{Eq24}) for system (\ref{Eq1}), the resulting closed-loop control system (\ref{Eq25}) is admissible, if and only if there exist matrices $P\in\mathbb{P}^{n\times n}_\alpha$, $Q \in {{\mathbb{R}}^{ n\times\left( {n - r} \right) }}$, $G\in \mathbb{R}^{m\times m}$ and $H\in\mathbb{R}^{m\times p}$, such that
\begin{equation}\label{Eq28}
{\textstyle \left[ {\begin{array}{cc}
\Phi &( {{E^{\rm{T}}}P + Q{E_2}} )B + {C^{\rm{T}}}{H^{\rm{T}}} - K_0^{\rm{T}}G^{\rm{T}} \\
{*}&{- G-G^{\rm{T}}}
\end{array}} \right] < 0}
\end{equation}
and the controller gain is given by
\begin{equation}\label{Eq29}
F = {G^{ - 1}}H,
\end{equation}
where $\Phi  = {\rm{sym}}( {{E^{\rm{T}}}P\left( {A + B{K_0}} \right) + Q{E_2}\left( {A + B{K_0}} \right)} )$, ${E_2} \in {{\mathbb{R}}^{\left( {n - r} \right)\times n }}$, ${\rm rank}\left(E_2\right)=n-r$, $E{E_2} = 0$ and $K_0$ is an intermediate matrix derived from ${K_0} = {Z}{( {{X}{E^{\rm{T}}} + {E_1}{Y}} )^{ - 1}}$. The matrices $X\in \mathbb{P}^{n\times n}_\alpha$, $Y\in \mathbb{R}^{\left(n-r\right)\times n}$ and $Z\in \mathbb{R}^{m\times n}$ satisfy the following LMI
\begin{equation}\label{Eq30}
{\rm{sym}}( {AX{E^{\rm{T}}} + A{E_1}Y + BZ} ) < 0,
\end{equation}
where ${E_1} \in {{\mathbb{R}}^{n \times \left( {n - r} \right)}}$, ${\rm rank}\left(E_1\right)=n-r$ and $E{E_1} = 0$.
\end{theorem}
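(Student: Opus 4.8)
The plan is to reduce the admissibility of the closed loop (\ref{Eq25}) to the dual inequality of Lemma \ref{Lemma2} and then to recognise (\ref{Eq28}) as an instance of the projection-type Lemma \ref{Lemma3}. Writing $N=E^{\rm T}P+QE_2$, Lemma \ref{Lemma2} states that (\ref{Eq25}) is admissible if and only if there exist $P\in\mathbb{P}^{n\times n}_\alpha$ and $Q$ with ${\rm sym}(N(A+BFC))<0$. The key device is the splitting $A+BFC=(A+BK_0)+B(FC-K_0)$, which yields ${\rm sym}(N(A+BFC))=\Phi+{\rm sym}(NB(FC-K_0))$ with $\Phi={\rm sym}(N(A+BK_0))$. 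Setting $a=NB$ and $b=(FC-K_0)^{\rm T}$, the pair $\{\Phi<0,\ \Phi+{\rm sym}(ab^{\rm T})<0\}$ is precisely the left-hand side of Lemma \ref{Lemma3}, so the whole argument reduces to matching (\ref{Eq28}) to the block inequality (\ref{Eq27}).

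For sufficiency I would start from a feasible tuple $(P,Q,G,H)$ of (\ref{Eq28}). The $(2,2)$-block $-G-G^{\rm T}<0$ forces $G+G^{\rm T}>0$, so $G$ is invertible and $F=G^{-1}H$ is well defined, i.e.\ $H=GF$. Substituting $H=GF$ turns the off-diagonal block of (\ref{Eq28}) into $NB+C^{\rm T}F^{\rm T}G^{\rm T}-K_0^{\rm T}G^{\rm T}=a+bG^{\rm T}$ with the $a,b$ above, so (\ref{Eq28}) is literally (\ref{Eq27}). Lemma \ref{Lemma3} then returns both $\Phi<0$ and $\Phi+{\rm sym}(ab^{\rm T})<0$; the second inequality is ${\rm sym}(N(A+BFC))<0$, and Lemma \ref{Lemma2} delivers admissibility of (\ref{Eq25}). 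One point I must keep straight is that $b$ depends on $G$ through $F$, so I would fix the feasible $G$ first, then define $F$, $a$ and $b$, and only afterwards invoke Lemma \ref{Lemma3} for these now-fixed data; this removes any apparent circularity.

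The hard part is necessity, because Lemma \ref{Lemma3} requires a single certificate $(P,Q)$ to satisfy both $\Phi<0$ and ${\rm sym}(N(A+BFC))<0$ at once, whereas admissibility of (\ref{Eq25}) only supplies a certificate for the latter. The trick I would use to bridge this gap is to choose the intermediate gain as $K_0=FC$. This is legitimate: admissibility of (\ref{Eq25}) means $A+B(FC)$ is admissible as a state-feedback closed loop, so Lemma \ref{Lemma1} furnishes $X\in\mathbb{P}^{n\times n}_\alpha$ and $Y$ with ${\rm sym}(AXE^{\rm T}+AE_1Y+BZ)<0$ for $Z=FC(XE^{\rm T}+E_1Y)$, which is exactly (\ref{Eq30}) and returns $K_0=Z(XE^{\rm T}+E_1Y)^{-1}=FC$. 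With $K_0=FC$ the correction term disappears, $b=(FC-K_0)^{\rm T}=0$, hence $\Phi={\rm sym}(N(A+BFC))$ and the two Lemma \ref{Lemma3} inequalities collapse onto the single admissibility inequality, which the Lemma \ref{Lemma2} certificate $(P,Q)$ already satisfies. Lemma \ref{Lemma3} then provides a $G$ (invertible, since $G+G^{\rm T}>0$); taking $H=GF$ makes (\ref{Eq28}) hold and recovers $F=G^{-1}H$, which finishes the argument.
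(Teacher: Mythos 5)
Your proof is correct and takes essentially the same route as the paper: the same splitting $A+BFC=(A+BK_0)+B(FC-K_0)$, the same identification of (\ref{Eq28}) with the block inequality (\ref{Eq27}) of Lemma \ref{Lemma3} (via $H=GF$, with invertibility of $G$ from $G+G^{\rm T}>0$), and the same use of Lemma \ref{Lemma2} for sufficiency and of Lemmas \ref{Lemma1}, \ref{Lemma2} for necessity. The only difference is that in the necessity part the paper chooses the intermediate gain as a perturbation $K_0=FC+K_\epsilon$ with $2\epsilon I_n-{\rm sym}(BK_\epsilon)>0$, whereas you take $K_0=FC$ exactly (the special case $K_\epsilon=0$, which the paper's own condition permits); this makes $b=0$, collapses the two inequalities of Lemma \ref{Lemma3} onto the single admissibility certificate, and is, if anything, a cleaner execution of the same argument.
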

\begin{proof}
\textbf{Sufficiency.}
Defining $a = ( {{E^{\rm{T}}}P + Q{E_2}} )B$ and $b = {C^{\rm{T}}}{F^{\rm{T}}} - K_0^{\rm{T}}$, then (\ref{Eq28}) can be rewritten as (\ref{Eq27}). By applying Lemma \ref{Lemma3}, the equivalent conditions can be expressed as
\begin{equation}\label{Eq31}
\Phi  = {\rm{sym}}\big( {{E^{\rm{T}}}{P}\left( {A + B{K_0}} \right) + {Q}{E_2\left( {A + B{K_0}} \right)} }\big) < 0,
\end{equation}
and
\begin{equation}\label{Eq32}
\begin{array}{rl}
\Phi  + {\rm{sym}}( {a{b^{\rm{T}}}} )=&\hspace{-6pt} {\rm{sym}}\big( {{E^{\rm{T}}}{P}\left( {A + B{K_0}} \right) + {Q}{E_2}\left( {A + B{K_0}} \right)} \big)\\
 &\hspace{-6pt}+ {\rm{sym}}\big( ( {{E^{\rm{T}}}P + Q{E_2}} )B ( {C^{\rm{T}}}{F^{\rm{T}}} - K_0^{\rm{T}})^{\rm{T}}\big)\\
=&\hspace{-6pt} {\rm{sym}}\big( {{E^{\rm{T}}}{P}( {A + B{K_0}} ) + {Q}{E_2}( {A + B{K_0}} )} \big)\\
 &\hspace{-6pt}+ {\rm{sym}}\big( {{E^{\rm{T}}}P( {BFC - B{K_0}} )} + { Q{E_2}( {BFC - B{K_0}} )} \big)\\
=&\hspace{-6pt}{\rm{sym}}\big( {{E^{\rm{T}}}{P}( {A + BFC} ) + {Q}{E_2}( {A + BFC} )} \big)\\
<&\hspace{-6pt} 0.
\end{array}
\end{equation}
Luckily, with the help of Lemma \ref{Lemma2}, the previous formula (\ref{Eq32}) clearly indicates that the closed-loop control system in (\ref{Eq25}) is admissible.

\textbf{Necessity.} Using Lemma \ref{Lemma2}, if the system (\ref{Eq25}) is admissible, then there exist matrices $P\in\mathbb{P}^{n\times n}_\alpha$ and $Q \in {{\mathbb{R}}^{ n\times\left( {n - r} \right) }}$ satisfying
\begin{equation}\label{Eq33}
{\rm{sym}}( {{E^{\rm{T}}}{P}\left( {A + BFC} \right) + {Q}{E_2}\left( {A + BFC} \right)} ) < 0.
\end{equation}

Based on the basic property of open set, there must exist small positive number $\epsilon$, such that
\begin{equation}\label{Eq34}
{\rm{sym}}( {{E^{\rm{T}}}{P}\left( {A + BFC + \epsilon I_n} \right) + {Q}{E_2}\left( {A + BFC + \epsilon I_n} \right)} ) < 0.
\end{equation}
When $K_\epsilon\in \mathbb{R}^{m\times n}$ and $2\epsilon I_n-{\rm{sym}}(BK_\epsilon)>0$, it follows
\begin{equation}\label{Eq35}
{\rm{sym}}( {{E^{\rm{T}}}{P}\left( {A + BFC + BK_\epsilon} \right) + {Q}{E_2}\left( {A + BFC + BK_\epsilon} \right)} ) < 0.
\end{equation}
Defining $K_0=FC + K_\epsilon$, one has
\begin{equation}\label{Eq36}
{\rm{sym}}( {{E^{\rm{T}}}{P}\left( {A + BK_0} \right) + {Q}{E_2}\left( {A + BK_0} \right)} ) < 0.
\end{equation}
By using the equivalence of formula (\ref{Eq26}) and formula (\ref{Eq27}), the desirable result (\ref{Eq28}) can be deduced immediately from formulas (\ref{Eq33}) and (\ref{Eq36}).

Note that, formula (\ref{Eq36}) means the following system
\begin{equation}\label{Eq37}
E{{\mathscr D}^\alpha }x\left( t \right) = \left( {A + B{K_0}} \right)x\left( t \right)
\end{equation}
is admissible. By adopting Lemma \ref{Lemma1}, one can conclude that there must exist matrices $X\in \mathbb{P}^{n\times n}_\alpha$ and $Y\in \mathbb{R}^{\left(n-r\right)\times n}$ satisfying
\begin{equation}\label{Eq38}
{\rm{sym}}( {(A+BK_0)X{E^{\rm{T}}} + (A+BK_0){E_1}Y} ) < 0.
\end{equation}
Setting $Z=K_0(XE^{\rm T}+E_1Y)$, (\ref{Eq30}) follows from (\ref{Eq38}) immediately.

All of these complete the proof of Theorem \ref{Theorem3}.
\end{proof}

\begin{remark}\label{Remark2}
Although \cite{Marir:2017JFI} has studied the output feedback control problem for singular FOS with $1<\alpha<2$, the main result in Theorem \ref{Theorem3} is questionable. With the introduction of relaxation matrix $G$, the coupling problem of decision matrices has been solved. Notably, a sufficient and necessary condition is systematically proposed for constructing output feedback controller. The proposed result in this study can not only solve the static output feedback problem but also solve the dynamic output feedback problem. The interested readers can refer to the reference \cite{Wei:2017ISA}.
\end{remark}

\subsection{Implementing strategy}

From \cite{Duan:2010Book}, it can be found that if the pair $\{E,A\}$ in (\ref{Eq1}) is regular, then there exist two invertible matrices satisfying
\begin{equation}\label{Eq39}
E = M\left[ {\begin{array}{*{20}{c}}
{{I_r}}&{}\\
{}&0
\end{array}} \right]N,A = M\left[ {\begin{array}{*{20}{c}}
{{{ A}_1}}&{{{ A}_2}}\\
{{{ A}_3}}&{{{ A}_4}}
\end{array}} \right]N,
\end{equation}
where ${\rm rank}({A}_4)=n-r$.

Defining $\tilde x\left( t \right) = \big[ {\begin{smallmatrix}
{{x_a}\left( t \right)}\\
{{x_b}\left( t \right)}
\end{smallmatrix}} \big] \triangleq Nx\left( t \right),\tilde B = \big[ {\begin{smallmatrix}
{{B_1}}\\
{{B_2}}
\end{smallmatrix}} \big] \triangleq {M^{ - 1}}B$, then the system in (\ref{Eq1}) can be rewritten as
\begin{equation}\label{Eq40}
\left[ {\begin{array}{*{20}{c}}
{{I_r}}&{}\\
{}&0
\end{array}} \right]\left[ {\begin{array}{*{20}{c}}
{{{\mathscr D}^\alpha }{x_a}\left( t \right)}\\
{{{\mathscr D}^\alpha }{x_b}\left( t \right)}
\end{array}} \right] = \left[ {\begin{array}{*{20}{c}}
{{A_1}}&{{A_2}}\\
{{A_3}}&{{A_4}}
\end{array}} \right]\left[ {\begin{array}{*{20}{c}}
{{x_a}\left( t \right)}\\
{{x_b}\left( t \right)}
\end{array}} \right] + \left[ {\begin{array}{*{20}{c}}
{{B_1}}\\
{{B_2}}
\end{array}} \right]u\left( t \right).
\end{equation}

Let us record ${A_a} = {A_1} - {A_2}A_4^{ - 1}{A_3},{B_a} = {B_1} - {A_2}A_4^{ - 1}{B_2},{A_b} =  - A_4^{ - 1}{A_3}$ and ${B_b}= - A_4^{ - 1}{B_2}$. Then the algebraic equation and the differential equation in (\ref{Eq40}) can be equivalently expressed as
\begin{equation}\label{Eq41}
{{\mathscr D}^\alpha }{x_a}\left( t \right) = {A_a}{x_a}\left( t \right) + {B_a}u\left( t \right),
\end{equation}
\begin{equation}\label{Eq42}
{x_b}\left( t \right) = {A_b}{x_a}\left( t \right) + {B_b}u\left( t \right).
\end{equation}
Note that $x_a\left( t \right)$ is governed by a regular fractional order differential equation and $x_b\left( t \right)$ can be calculated from $x_a\left( t \right)$ and $u\left( t \right)$ directly. Theorem \ref{Theorem2} and Theorem \ref{Theorem3} are both applicable to the case of $\alpha\in(0,1)$. In this case, the investigated singular fractional order control system can be implemented successfully with the methods in \cite{Wei:2016ISA}. Actually, when $\alpha=1$, $\mathbb{P}^{n\times n}_{\alpha}$ reduces to the classical positive definite matrix set and the obtained results still hold. The corresponding implementation problem can be regarded as a special case of the previous one.

Because the fundamental Lemma \ref{Lemma1} and Lemma \ref{Lemma2} require $\alpha\in(0,1]$, some measures should be taken specially for $\alpha\in(1,2)$. For any $k\in\mathbb{N}_+$, an auxiliary system can be built as
\begin{equation}\label{Eq43}
\left\{ \begin{array}{rl}
E{{\mathscr D}^{{\alpha  \mathord{\left/
 {\vphantom {\alpha  k}} \right.
 \kern-\nulldelimiterspace} k}}}z_1\left( t \right) = &\hspace{-6pt}{z_2}\left( t \right),\\
{{\mathscr D}^{{\alpha  \mathord{\left/
 {\vphantom {\alpha  k}} \right.
 \kern-\nulldelimiterspace} k}}}{z_i}\left( t \right) =&\hspace{-6pt} {z_{i+1}}\left( t \right),i=2,3,\cdots,k-1,\\
{{\mathscr D}^{{\alpha  \mathord{\left/
 {\vphantom {\alpha  k}} \right.
 \kern-\nulldelimiterspace} k}}}{z_k}\left( t \right) =&\hspace{-6pt} Az_1\left( t \right) + Bu\left( t \right),\\
y\left( t \right) =&\hspace{-6pt} Cz_1\left( t \right).
\end{array} \right.
\end{equation}

Due to the fact that the transfer function of the system (\ref{Eq1}) is equivalent to that of the system (\ref{Eq43}), namely,
\begin{equation}\label{Eq44}
G\left( s \right) = C{\left( {{s^\alpha }E - A} \right)^{ - 1}}B,
\end{equation}
and the initial conditions of linear singular FOSs do not affect their admissibility. Note that $\alpha/k\in(0,1)$ for $k\ge 2$, and therefore the admissibility of system (\ref{Eq1}) can be determined from the system (\ref{Eq43}). In other words, the procedure for dealing with the system with $\alpha\in(1,2)$ can be summarized as follows. i) With the help of the model transform in (\ref{Eq43}), then an auxiliary system can be constructed. ii) By applying Theorem \ref{Theorem2} or Theorem \ref{Theorem3}, the needed controller can be designed. iii) By using the model transform in (\ref{Eq40}), an easy to implement model can be obtained. Although the presented criteria are suitable for both $0<\alpha<1$ and $1<\alpha<2$, the main results are not independent of $\alpha$ since the critical set $\mathbb{P}_\alpha ^{n \times n}$ is relevant to $\alpha$.

Besides the Caputo derivative, the following Riemann--Liouville derivative is also widely used
\begin{equation}\label{Eq45}
\begin{array}{l}
{\mathscr D}^\alpha f\left( t \right) \triangleq \frac{{{{\rm{d}}^n}}}{{{\rm{d}}{t^n}}}\frac{1}{{\Gamma \left( {n - \alpha } \right)}}\int_{0}^t {{{\left( {t - \tau } \right)}^{n - \alpha  - 1}}f\left( \tau  \right){\rm{d}}\tau },
\end{array}
\end{equation}
where $n-1<\alpha<n$ and $n\in\mathbb{N}_+$. It has shown that an FOS with Caputo derivative and Riemann--Liouville derivative are equivalent in the form of infinite dimensional state space model except the initial conditions \cite{Wei:2016ISA}. Additionally, the initial conditions do not affect the admissibility. As a result, the elaborated approaches still can be extended to the Riemann--Liouville case.

\begin{remark}\label{Remark3}
Before ending this main results section, it is worth mentioning the main contributions of this work. i) An essential property on fractional order positive definite matrix is derived, which  plays an important role in the sequel. ii) A sufficient and necessary condition for designing observer based controller is proposed, which needs fewer real decision matrices. iii) A sufficient and necessary condition for designing output feedback controller is developed, which is the first time to give such a complete condition. iv) After clearly providing the implementing strategy, the results are extended to the case of $1<\alpha<2$.
\end{remark}

\section{Numerical Example}\label{Section4}
In this section, we provide two numerical examples to illustrate the applicability of the proposed method. To enhance the persuasion, consider the benchmark system from \cite{Marir:2017JFI} as follows
\begin{equation}\label{Eq46}
\left\{ \begin{array}{rl}
\left[ {\begin{array}{*{20}{l}}
1&1&1\\
0&1&1\\
0&0&0
\end{array}} \right]{{\mathscr D}^\alpha }x\left( t \right) =&\hspace{-6pt} \left[ {\begin{array}{*{20}{l}}
1&1&{ - 1}\\
2&{ - 2}&{ - 1}\\
4&1&{ - 4}
\end{array}} \right]x\left( t \right) + \left[ {\begin{array}{*{20}{l}}
1\\
1\\
1
\end{array}} \right]u\left( t \right),\\
y\left( t \right) =&\hspace{-6pt} [ {\begin{array}{*{20}{l}}
1&0&1
\end{array}} ]x\left( t \right),
\end{array} \right.
\end{equation}
which has been shown as regular, impulse free and unstable with zero input. To guarantee the system state change smoothly, a zero input should be provided at $t=0$, thus $\hat x(0)=0$ and $y(0)=0$ are initialized. To make the equation (\ref{Eq46}) hold without input, $[ {\begin{array}{*{20}{c}}
{4}&{1}&{-4}
\end{array}} ]x(t)=0$ is expected. Without loss of generality, let us configure the initial pseudo state as $x(0)= [ {\begin{array}{*{20}{c}}
{-0.25}&{2}&{0.25}
\end{array}} ]^{\rm T}$.

\begin{example}
Consider the singular FOS {\rm (\ref{Eq46})} with $\alpha=0.6$. Firstly, $M = \left[ {\begin{smallmatrix}
{{\rm{1.6834}}}&{ - {\rm{0.4075}}}&0\\
{{\rm{1.3144}}}&{{\rm{0.5219}}}&0\\
0&0&1
\end{smallmatrix}} \right]$, $N = \left[ {\begin{smallmatrix}
{{\rm{0.369}}}&{{\rm{0.6572}}}&{{\rm{0.6572}}}\\
{ - {\rm{0.9294}}}&{{\rm{0.261}}}&{{\rm{0.261}}}\\
0&{ - {\rm{0.7071}}}&{{\rm{0.7071}}}
\end{smallmatrix}} \right]$, $E_1= [ {\begin{array}{*{20}{c}}
{0}&{1}&{-1}
\end{array}} ]^{\rm T}$ and $E_2= [ {\begin{array}{*{20}{c}}
{0}&{0}&{1}
\end{array}} ]$ are chosen to satisfy $E=M\left[ {\begin{smallmatrix}
{{I_2}}&{}\\
{}&0
\end{smallmatrix}} \right]N$, $E{E_1} = 0$ and ${E_2}E = 0$. The initial condition of the observer is set as $\hat x(0)=[ {\begin{array}{*{20}{c}}
{0}&{0}&{0}
\end{array}} ]^{\rm T}$. Then, by applying Theorem \ref{Theorem2}, the needed controller gain and observer gain
can be obtained as
\[\begin{array}{l}
K = [ {\begin{array}{*{20}{c}}
{-3.1656}&{-0.4720}&{2.4146}
\end{array}} ],\\
L = {[ {\begin{array}{*{20}{c}}
{ -0.1821}&{0.0996}&{0.7768}
\end{array}} ]^{\rm{T}}},
\end{array}\]
respectively. To confirm the effectiveness of the obtained results, the curves of the system state $x(t)$ and the control input $u(t)$ are given in Fig. \ref{Fig1} and Fig. \ref{Fig2}, respectively.
\begin{figure}[!htbp]
\centering
\includegraphics[width=0.8\columnwidth]{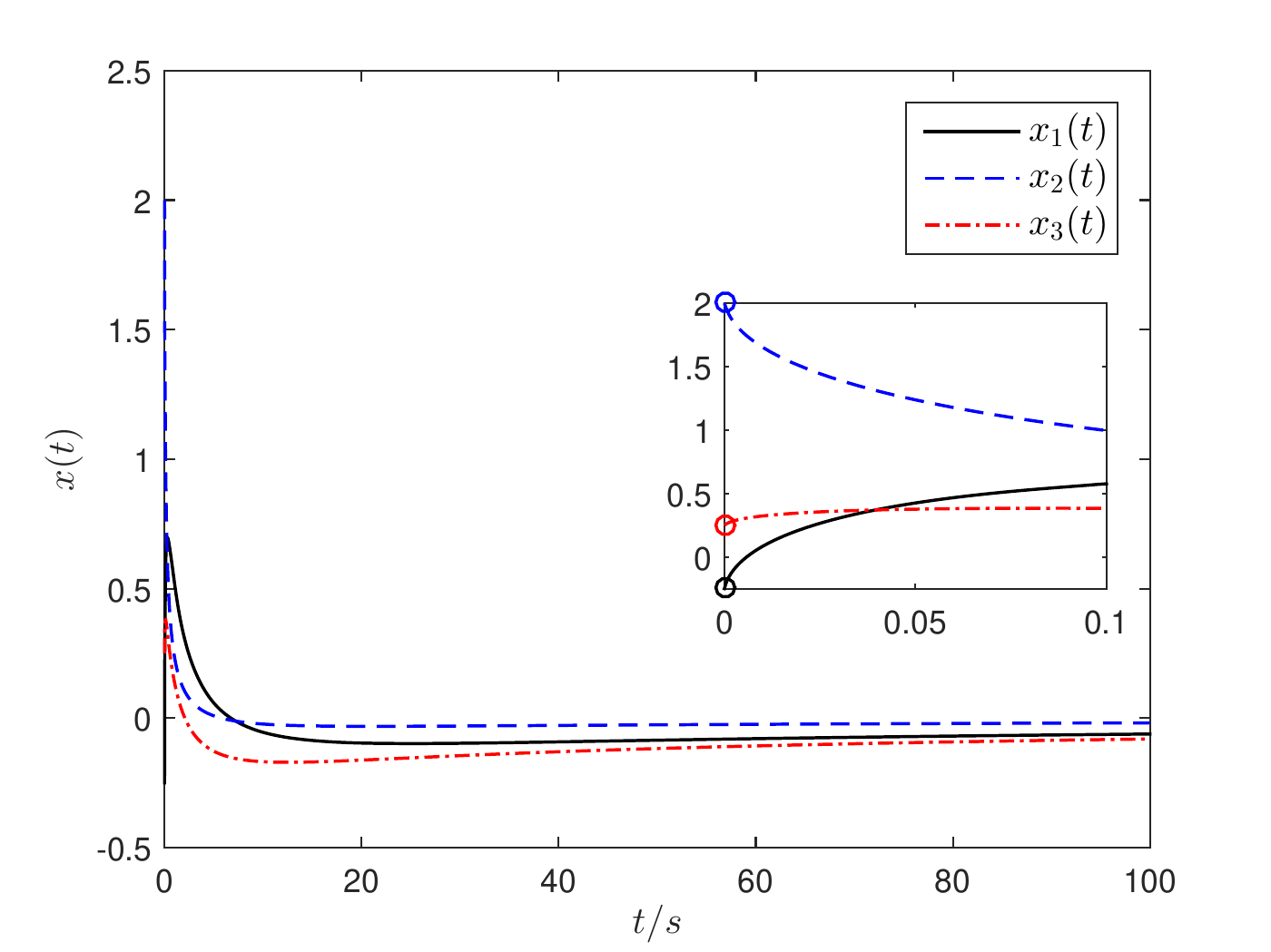}\vspace{-10pt}
\caption{$x(t)$ for system (\ref{Eq46}) under observer based control ($\alpha=0.6$).}\label{Fig1}
\end{figure}
\begin{figure}[!htbp]
\centering
\includegraphics[width=0.8\columnwidth]{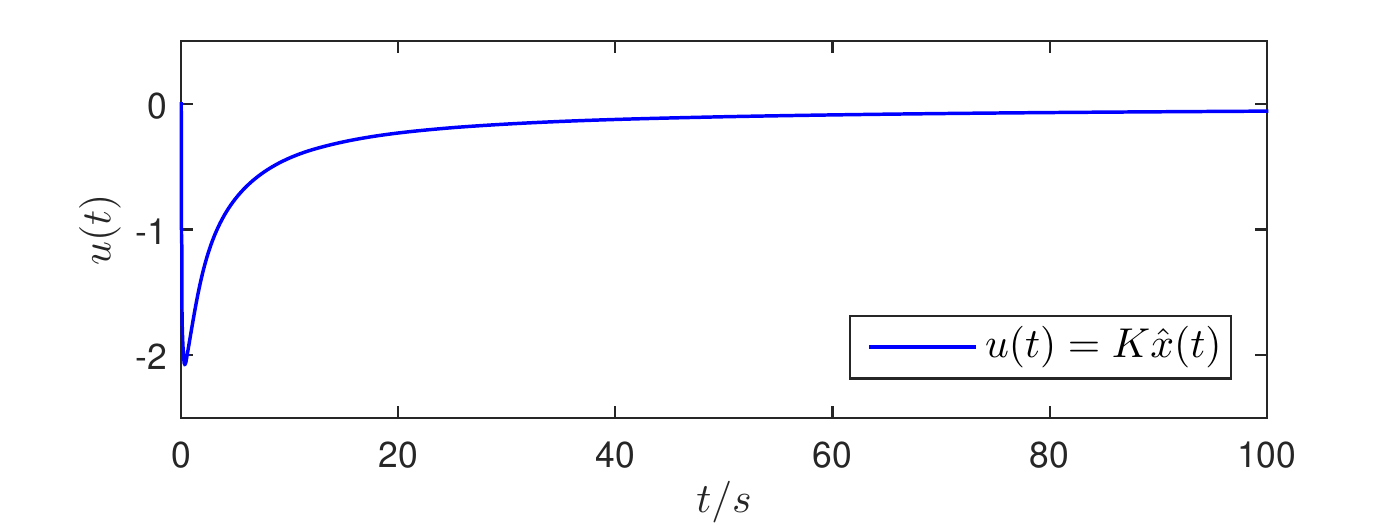}\vspace{-10pt}
\caption{$u(t)$ for system (\ref{Eq46}) under observer based control ($\alpha=0.6$).}\label{Fig2}
\end{figure}

The admissibility of the closed-loop control system in (\ref{Eq6}) can be seen clearly \textcolor[rgb]{0,0,1}{from} these two figures. Additionally, the convergence speed is slowly as claimed in \cite{Monje:2010Book} and is approximately equal to $t^{-\alpha}$. From the enlarged view, it can be observed that the initial conditions coincide with the setting value. 
To show the observation performance clearly, the observation error is defined as $e(t)\triangleq x(t)-\hat x(t)$ and its curve is displayed in Fig. \ref{Fig3}, which clearly demonstrates the validity of the designed observer. It can be found that the initial observation error $e(0)$ equals to $x(0)$, since zero initial conditions are exerted on the developed observer.
\begin{figure}[!htbp]
\centering
\includegraphics[width=0.8\columnwidth]{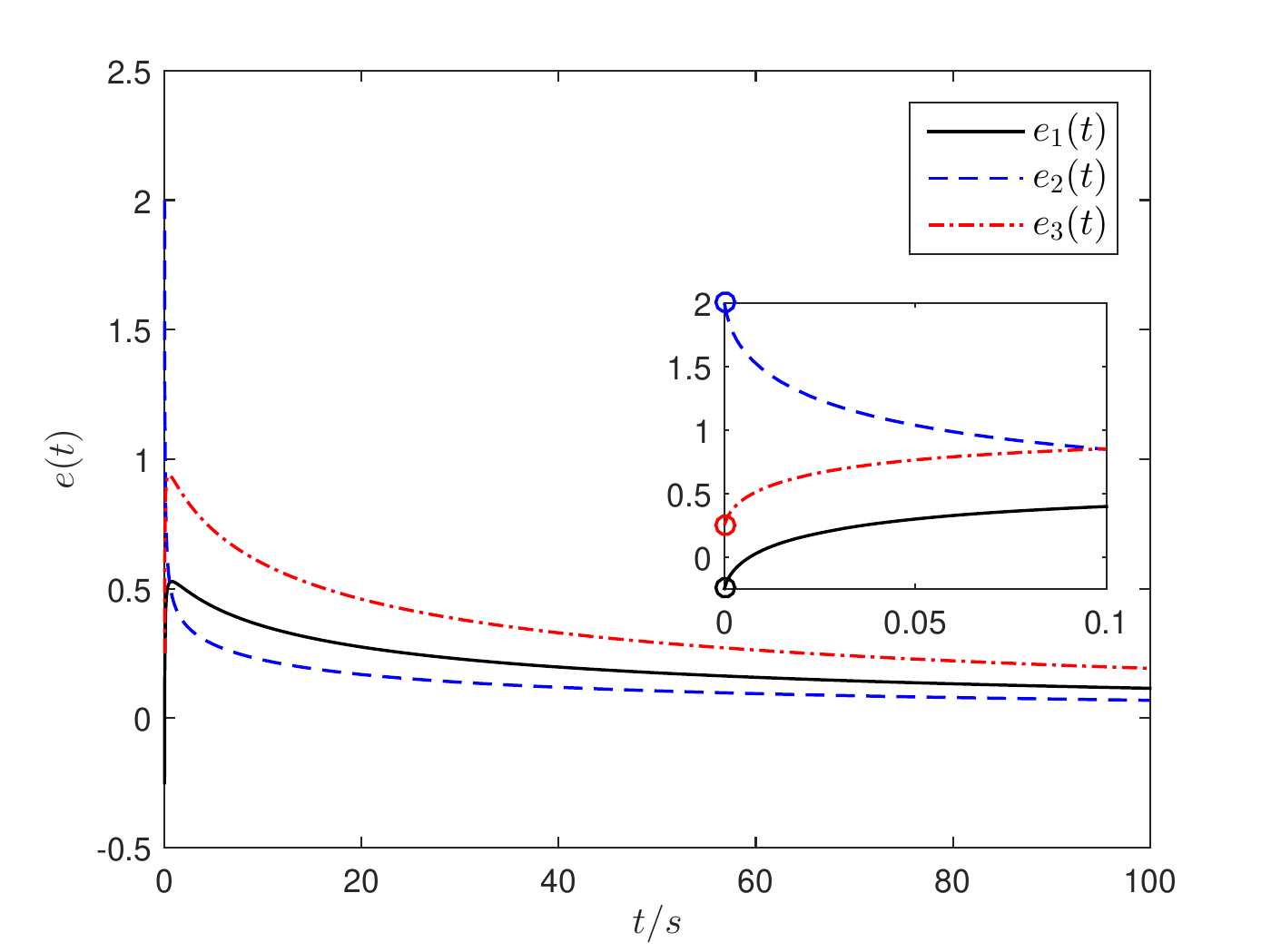}\vspace{-10pt}
\caption{The observation error of $x(t)$ for system (\ref{Eq46}) ($\alpha=0.6$).}\label{Fig3}
\end{figure}

From Theorem \ref{Theorem3}, the feasible controller gains can be obtained as follows
\[\begin{array}{l}
K_0 = [ {\begin{array}{*{20}{c}}
{-3.1656}&{-0.4720}&{2.4146}
\end{array}} ],\\
F = -3.6723.
\end{array}\]
The time response of the closed-loop control system (\ref{Eq25}) with output feedback control law (\ref{Eq24}) is illustrated in Fig. \ref{Fig4} and the
corresponding control input is \textcolor[rgb]{0,0,1}{shown} in Fig. \ref{Fig5}, which \textcolor[rgb]{0,0,1}{reveals} that the resulting system is admissible and its state $x(t)$ converge to zero as $t\to +\infty$. Note that the initial value of $x(t)$ indeed appears as predefined.
\begin{figure}[!htbp]
\centering
\includegraphics[width=0.8\columnwidth]{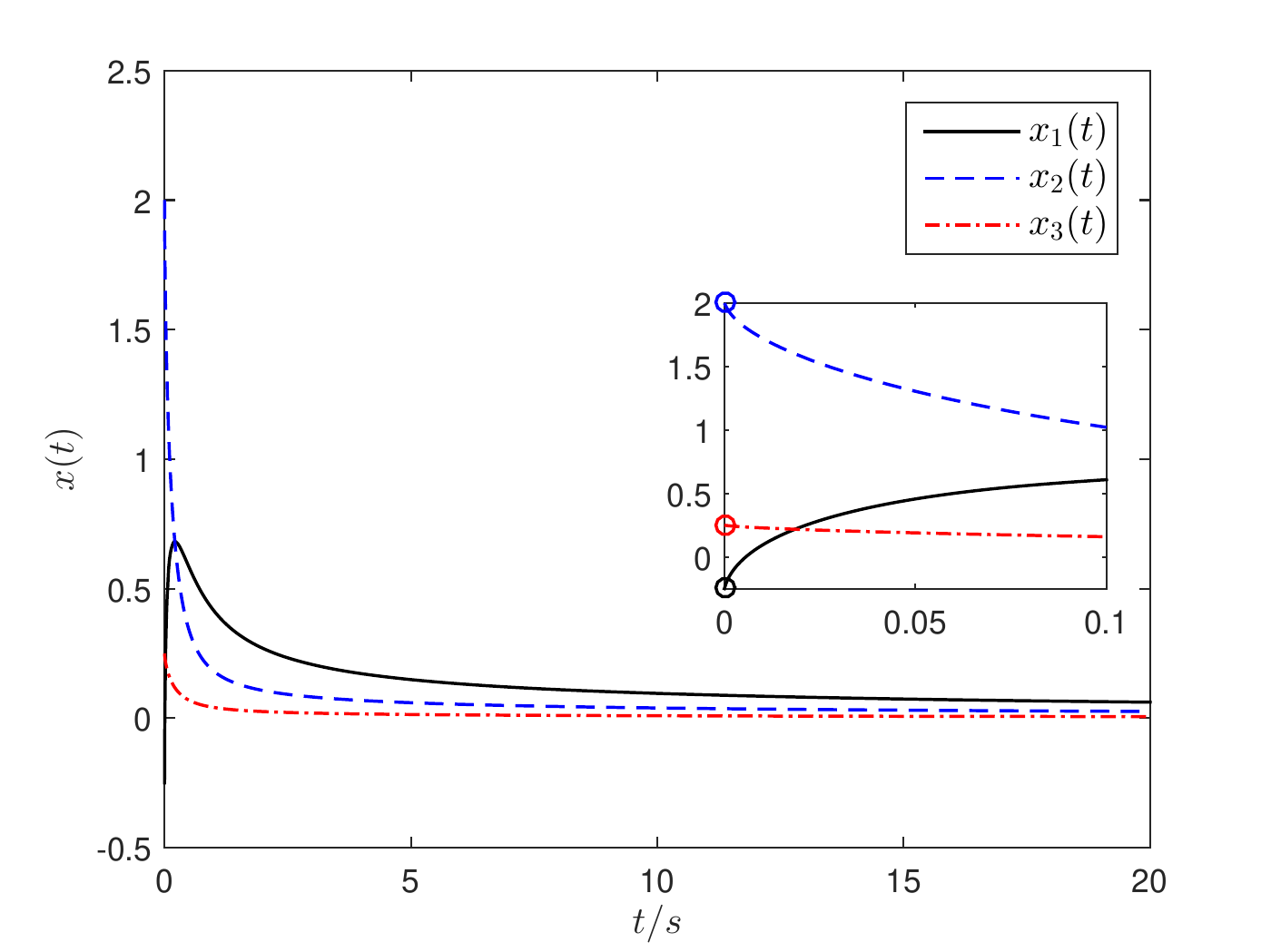}\vspace{-10pt}
\caption{$x(t)$ for system (\ref{Eq46}) under output feedback control ($\alpha=0.6$).}\label{Fig4}
\end{figure}
\begin{figure}[!htbp]
\centering
\includegraphics[width=0.8\columnwidth]{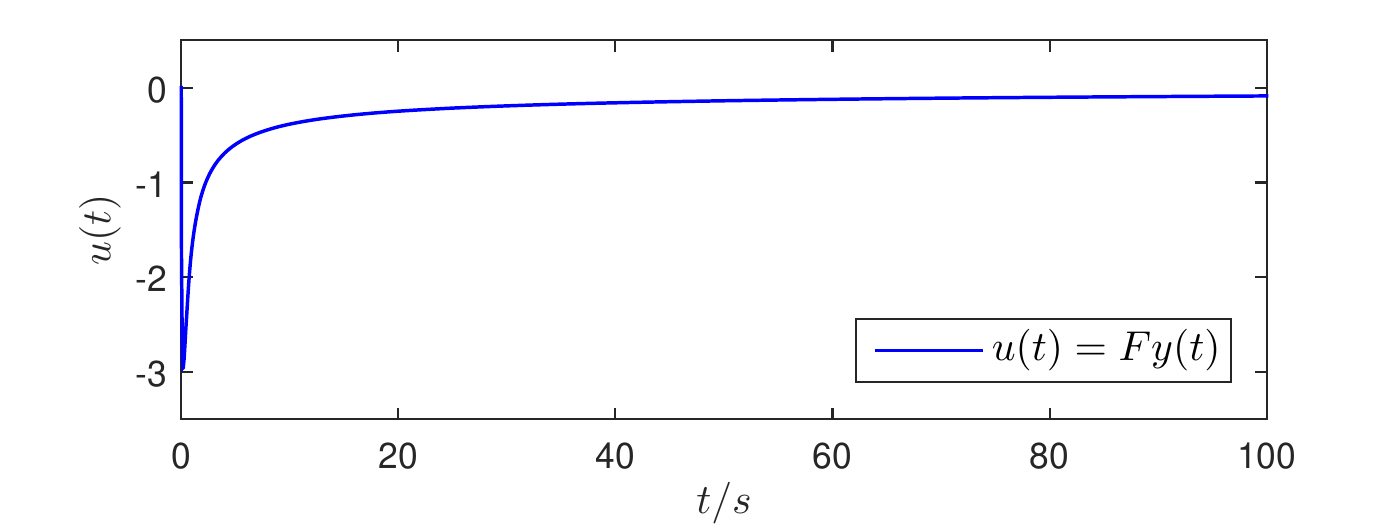}\vspace{-10pt}
\caption{$u(t)$ for system (\ref{Eq46}) under output feedback control ($\alpha=0.6$).}\label{Fig5}
\end{figure}
\end{example}

\begin{example}
Consider the singular FOS (\ref{Eq46}) with the related parameters $\alpha=1.2$. $k=2$ is selected for the model transform scheme in (\ref{Eq43}). The corresponding matrices $M,N,E_1,E_2$ can be designed in accordance with the principles of Example 1. To ensure the admissability of the closed-loop system, the feasible parameters are designed as
\[\begin{array}{l}
K = \left[-0.8663~-0.2339~-0.2990~-1.0001~-0.7116~0.2144~\right],\\
L = \left[-1.7022~0.1766~-0.0905~-4.059~-0.0028~-6.4078~\right]^{\rm{T}},\\
K_0 = \left[-0.8663~-0.2339~-0.2990~-1.0001~-0.7116~0.2144~\right],\\
F = -0.9515.
\end{array}\]
It should point out that with the help of (\ref{Eq43}), the desired controller is designed while the simulation is performed on the original system (\ref{Eq46}) with $\alpha=1.2$. Because $\alpha>1$, an additional initial condition $\dot x(0)=[ {\begin{array}{*{20}{c}}
{0}&{0}&{0}
\end{array}} ]^{\rm T}$ is given except $x(0)$. The time response of the closed-loop systems (\ref{Eq6}) with the designed controller is illustrated in Fig. \ref{Fig6}, the corresponding control input is given in Fig. \ref{Fig7}, and the state observation performance is provided in Fig. \ref{Fig8}. All of these show that the observer and the corresponding controller perform well. For the output feedback control case, the time response of the closed-loop system (\ref{Eq25}) is plotted in Fig. \ref{Fig9}, and the corresponding control input is given in Fig. \ref{Fig10}, which show that it is asymptotically stable and its states converge to zero gradually. Additionally, compared with the $0<\alpha<1$ case, $x(t)$ in the case of $1<\alpha<2$ converge more rapidly with comparable control, while the overshoot occur as expected. All the conducted simulations study shows the effectiveness of the proposed control scheme.

\begin{figure}[!htbp]
\centering
\includegraphics[width=0.8\columnwidth]{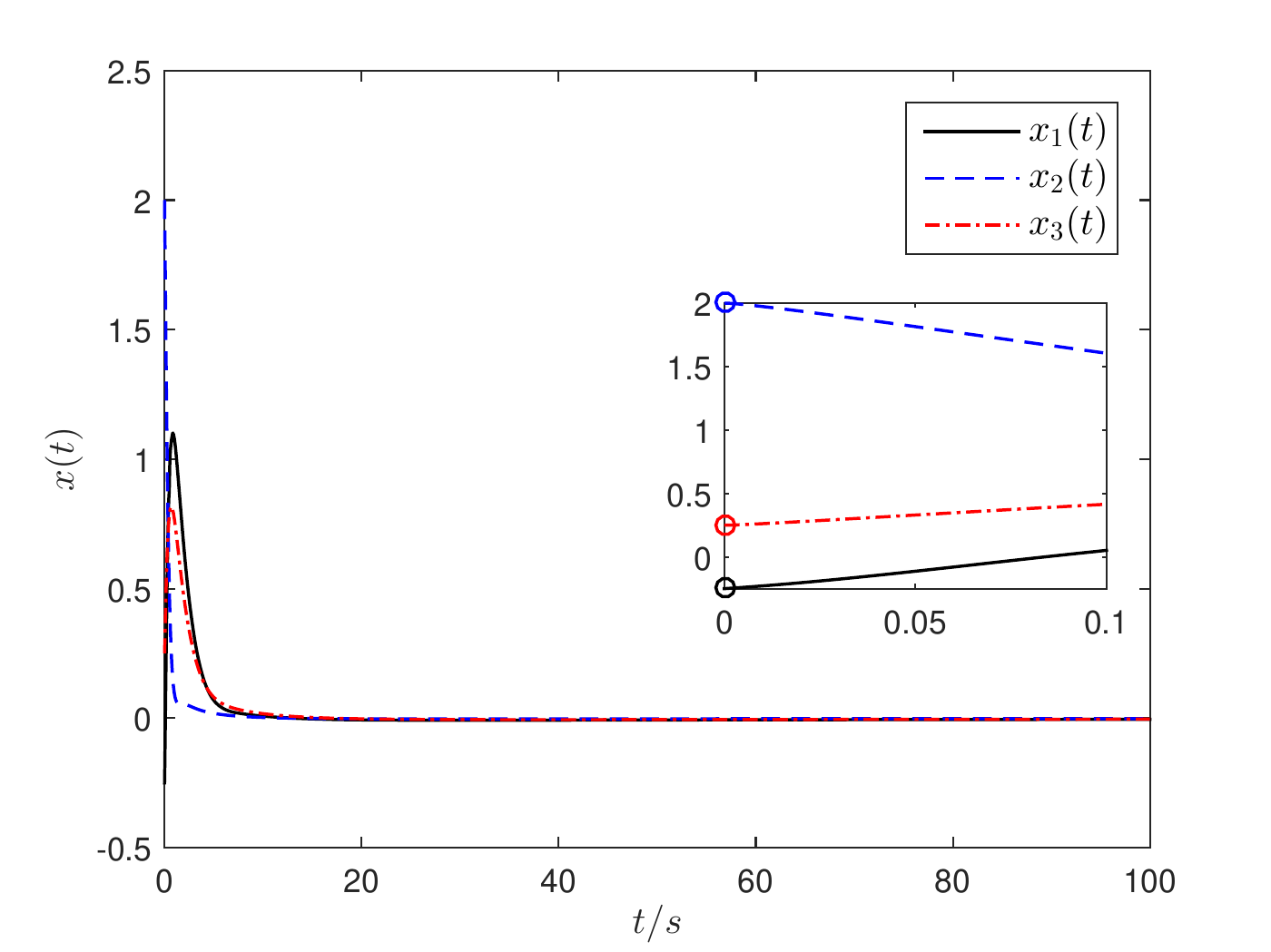}\vspace{-10pt}
\caption{$x(t)$ for system (\ref{Eq46}) under observer based control ($\alpha=1.2$).}\label{Fig6}
\end{figure}
\begin{figure}[!htbp]
\centering
\includegraphics[width=0.8\columnwidth]{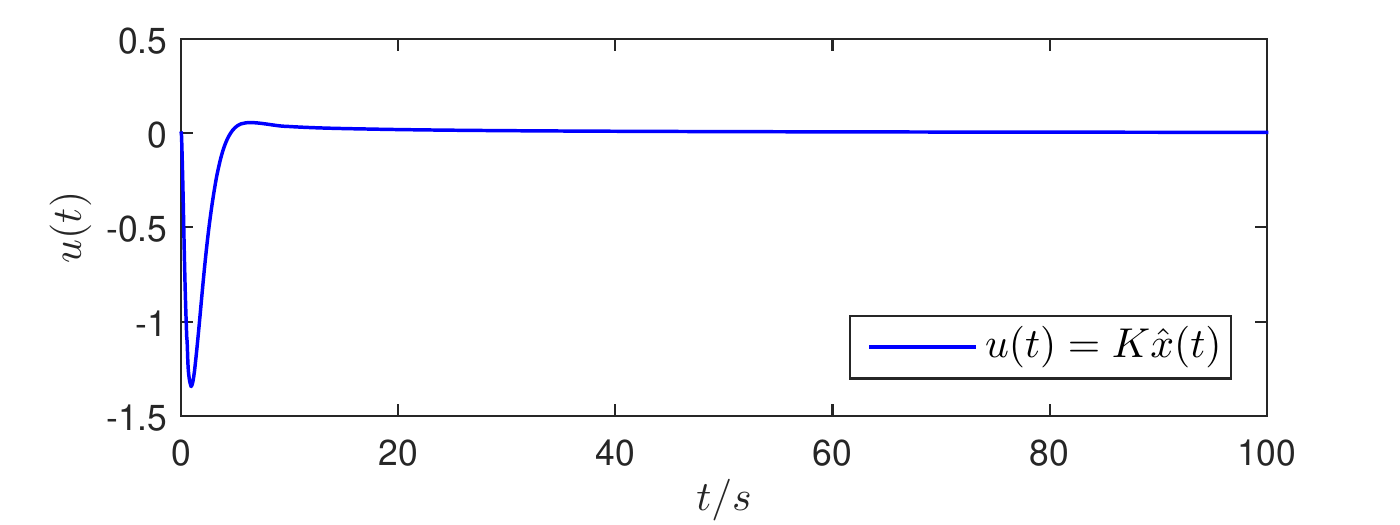}\vspace{-10pt}
\caption{$u(t)$ for system (\ref{Eq46}) under observer based control ($\alpha=1.2$).}\label{Fig7}
\end{figure}
\begin{figure}[!htbp]
\centering
\includegraphics[width=0.8\columnwidth]{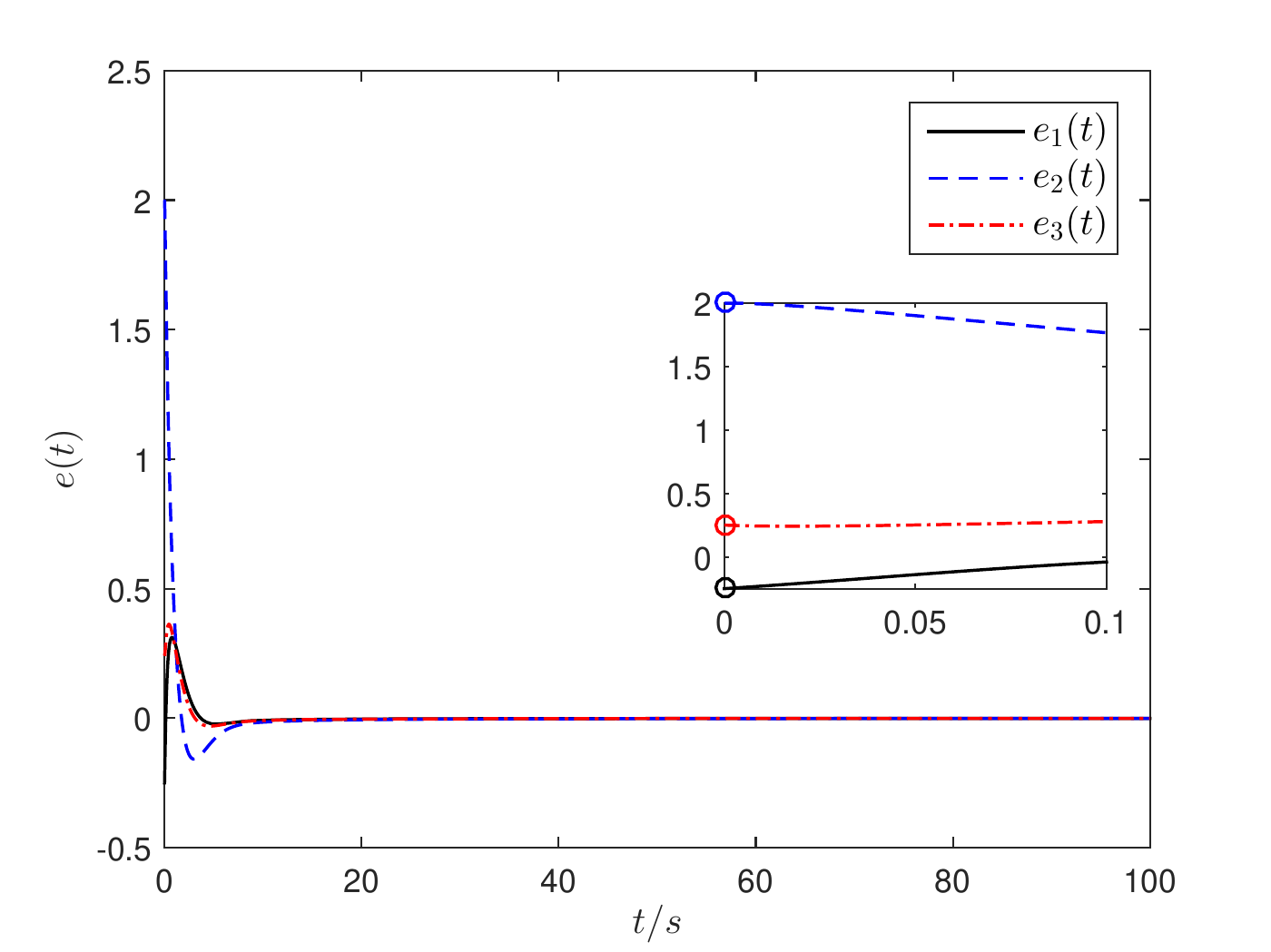}\vspace{-10pt}
\caption{The observation error of $x(t)$ for system (\ref{Eq46}) ($\alpha=1.2$).}\label{Fig8}
\end{figure}
\begin{figure}[!htbp]
\centering
\includegraphics[width=0.8\columnwidth]{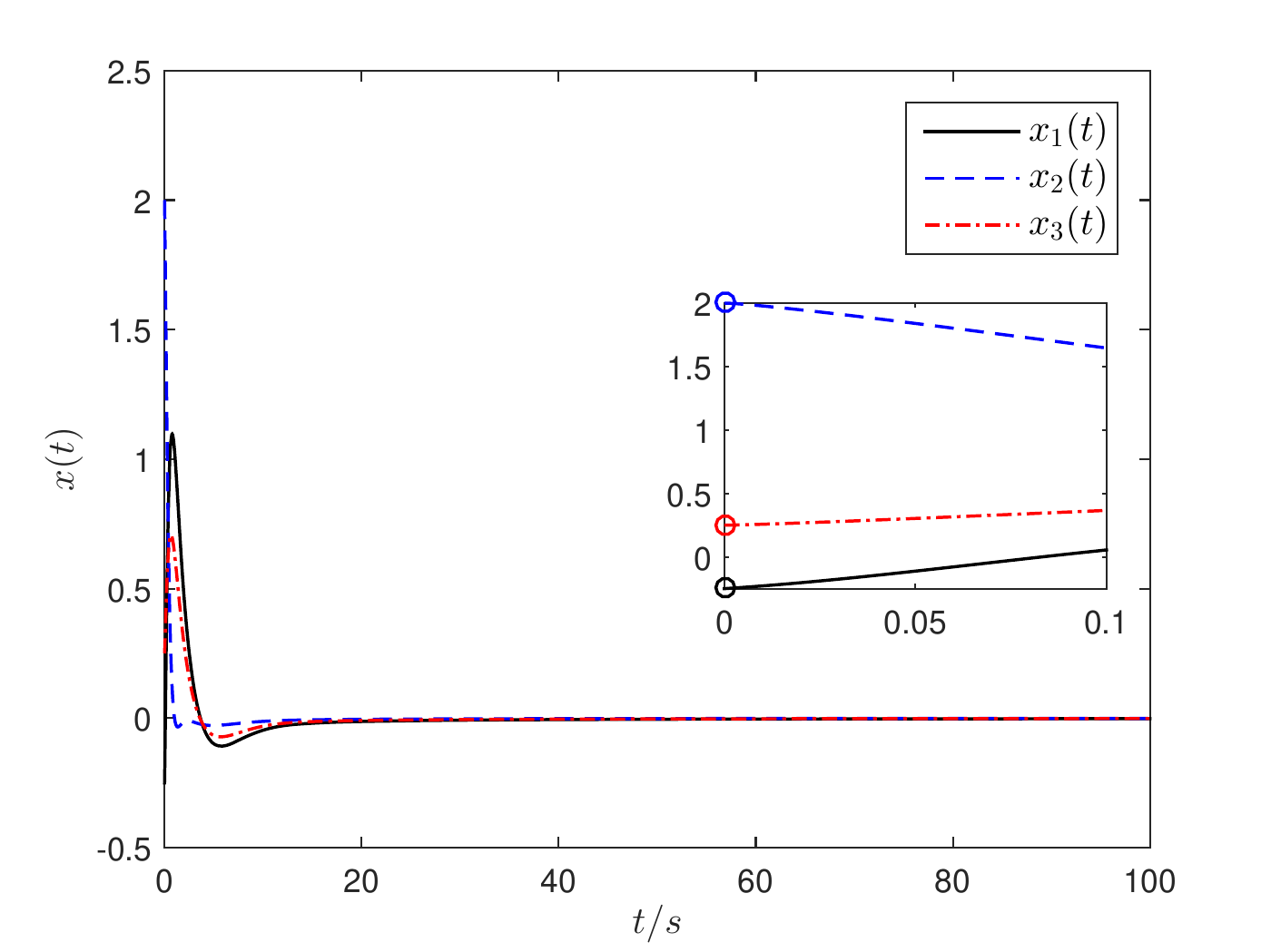}\vspace{-10pt}
\caption{$x(t)$ for system (\ref{Eq46}) under output feedback control ($\alpha=1.2$).}\label{Fig9}
\end{figure}
\begin{figure}[!htbp]
\centering
\includegraphics[width=0.8\columnwidth]{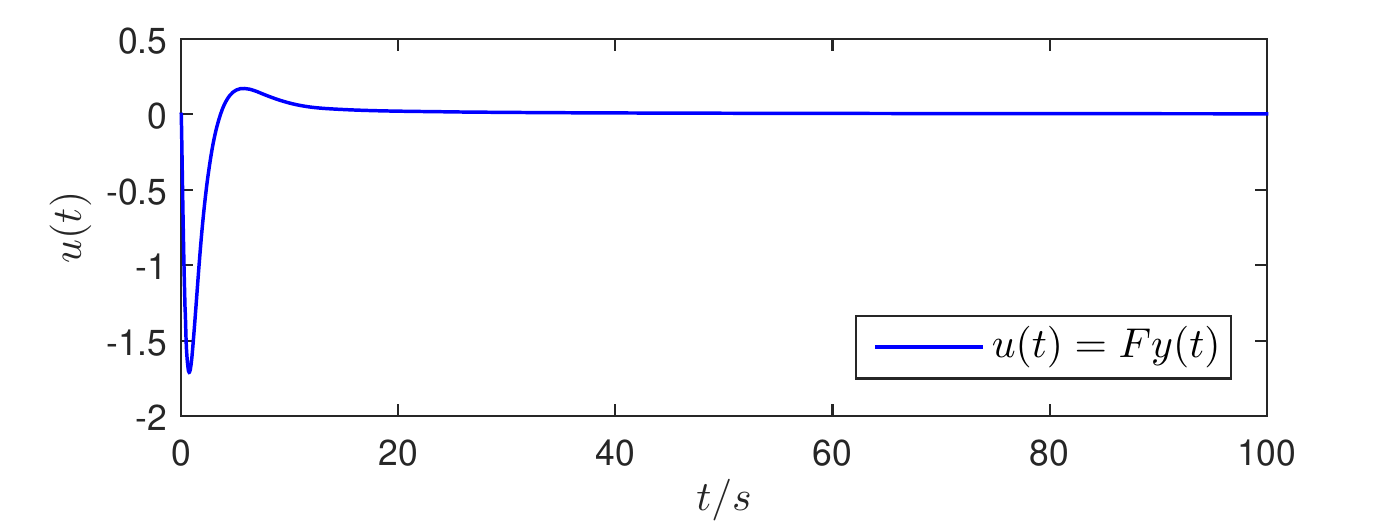}\vspace{-10pt}
\caption{$u(t)$ for system (\ref{Eq46}) under output feedback control ($\alpha=1.2$).}\label{Fig10}
\end{figure}
\end{example}

\section{Conclusions}\label{Section5}
In this paper, the stabilization issue of singular fractional order systems with partial state feedback has been studied. By the aid of the admissibility criteria we proposed before, two sufficient and necessary criteria are derived for the observer based control case and the output feedback control case, respectively. The results are expressed in terms of LMI, which are convenient to use. Furthermore, the applicability and implementation are discussed. Numerical simulation has shown the effectiveness of the theoretic results obtained. It is believed that this study provides a promising way to solve such problems of singular fractional order systems.

%
\section*{Acknowledgements}
The work described in this paper was supported by the National Natural Science Foundation of China (61601431, 61573332), the Anhui Provincial Natural Science Foundation (1708085QF141), the fund of China Scholarship Council (201806345002), the Fundamental Research Funds for the Central Universities (WK2100100028) and the General Financial Grant from the China Postdoctoral Science Foundation (2016M602032).

%

\phantomsection
\addcontentsline{toc}{section}{References}
\section*{References}
\bibliographystyle{model1-num-names}
\bibliography{database}

\end{document}